\newcommand{\Real}{\mathbb{R}}
\newcommand{\Natural}{\mathbb{N}}
\newcommand{\Lip}{\operatorname{Lip}}
\newcommand{\supp}{\operatorname{supp}}
\newtheorem{thm}{Theorem}[section]
\newtheorem{cor}[thm]{Corollary}
\newtheorem{lem}[thm]{Lemma}
\newtheorem{prop}[thm]{Proposition}
\newtheorem{rem}[thm]{Remark}
\newtheorem{defn}[thm]{Definition}
\numberwithin{equation}{section}
\newenvironment{prooftheoMyersNakai}%
    {\par\noindent{\it Proof of Theorem \ref{Myers:Nakai}. }\nopagebreak\normalsize}%
    {\hfill\linebreak[2]\hspace*{\fill}$\square$\\ }
\begin{document}

\title[Algebras of smooth functions on Banach-Finsler manifolds]{Characterization of a Banach-Finsler manifold in terms of the algebras of smooth functions}

\author{J.A.  Jaramillo, M. Jim{\'e}nez-Sevilla and L. S\'anchez-Gonz\'alez}

\address{Departamento de An{\'a}lisis Matem{\'a}tico\\ Facultad de
Matem{\'a}ticas\\ Universidad Complutense de Madrid\\ 28040 Madrid, Spain}

\thanks{Supported in part by DGES (Spain) Project MTM2009-07848. L. S\'anchez-Gonz\'alez has  also been supported by grant MEC AP2007-00868}

\email{jaramil@mat.ucm.es, marjim@mat.ucm.es,
lfsanche@mat.ucm.es}

\keywords{Finsler manifolds, algebras of smooth functions, geometry of Banach spaces}

\subjclass[2010]{58B10, 58B20, 46T05, 46T20, 46E25, 46B20, 54C35}

\date{August, 2011}

\maketitle


\begin{abstract}
In this note we give sufficient conditions to ensure that the weak Finsler structure of a complete $C^{k}$ Finsler manifold $M$ is determined by the normed algebra $C_b^k(M)$  of all real-valued, bounded and $C^k$ smooth functions with bounded derivative defined on $M$. As a consequence, we obtain: (i) the  Finsler structure of a finite-dimensional and complete $C^{k}$ Finsler manifold  $M$ is  determined by the algebra $C_b^k(M)$;  (ii) the weak Finsler structure of a separable and complete $C^{k}$ Finsler manifold $M$ modeled on a Banach space with a Lipschitz and $C^k$ smooth bump function is determined by the algebra $C^k_b(M)$; (iii) the weak Finsler structure of a $C^k$ uniformly bumpable and complete $C^{k}$ Finsler manifold $M$ modeled on a Weakly Compactly Generated (WCG) Banach space with an (equivalent)  $C^k$ smooth norm  is determined by the algebra $C^k_b(M)$;
and (iv) the isometric structure of a WCG Banach  space $X$ with an $C^1$ smooth bump function is determined by the algebra $C_b^1(X)$.
\end{abstract}


\section{Introduction and Preliminaries}
In this note, we are interested in characterizing the Finsler structure of a Finsler manifold $M$  in terms of the space of real-valued, bounded and $C^k$ smooth functions with bounded derivative defined on $M$. The problem of the interrelation of  the topological, metric and smooth structure of a space $X$  and  the algebraic and topological  structure of the space $C(X)$ (the set of real-valued continuous functions defined on $X$)  has been largely studied. These results are usually referred as \emph{Banach-Stone type theorems}.  Recall the celebrated Banach-Stone theorem, asserting that the compact spaces $K$ and $L$ are homeomorphic if
 and only if the Banach spaces $C(K)$ and $C(L)$ endowed with the sup-norm are isometric. For more information on Banach-Stone type theorems see the survey \cite{GaJaExtracta} and references therein.


The Myers-Nakai theorem states that the structure of a complete Riemannian manifold $M$ is characterized in terms of the Banach algebra $C^1_b(M)$ of all real-valued, bounded and $C^1$ smooth functions  with bounded derivative defined on $M$ endowed with the sup-norm of the function and its derivative.  More specifically, two complete Riemannian manifolds $M$ and $N$ are equivalent as Riemannian manifolds, i.e. there is a $C^1$ diffeomorphism $h:M\to N$ such that
\begin{equation*}
\langle dh(x)(v),dh(x)(w)\rangle_{h(x)}=\langle v,w \rangle_x
\end{equation*}
for every $x\in M$ and $v,w\in T_xM$ if and only if the Banach algebras $C^1_b(M)$ and $C^1_b(N)$  are isometric. This result was first proved by S. B. Myers \cite{Myers} for a compact and Riemannian manifold and by M. Nakai \cite{Nakai} for a finite-dimensional  Riemannian manifold. Very recently,  I. Garrido, J.A. Jaramillo and Y.C. Rangel \cite{GaJaRa} gave an extension of the  Myers-Nakai theorem for every infinite-dimensional, complete Riemannian manifold. A similar result  for  the so-called finite-dimensional Riemannian-Finsler  manifolds is given in \cite{GaJaRa2} (see also \cite{Rangel-Tesis}).

 Our aim in this work  is to extend the  Myers-Nakai theorem to the context of Finsler manifolds. On the one hand, we obtain  the Myers-Nakai theorem  for (i) finite-dimensional and complete Finsler manifolds, and (ii) WCG Banach  spaces  with a $C^1$ smooth bump function. On the other hand, we study for $k\ge 1$ the algebra $C^k_b(M)$ of all real-valued, bounded and $C^k$ smooth functions with  bounded first derivative defined on a complete Finsler manifold $M$. We prove that these algebras determine 
the weak Finsler structure of a complete Finsler manifold when $k=1$ and the Finsler structure when $k\ge 2$. 
 In particular, we obtain  a weaker
  version of the Myers-Nakai theorem for  (i) separable and complete Finsler manifolds modeled on a Banach space with a Lipschitz and $C^k$ smooth bump function,  and (ii) $C^k$ uniformly bumpable and complete Finsler manifolds  modeled on  WCG Banach spaces with an equivalent $C^k$ smooth norm. In the proof of these results we will use the ideas of the Riemannian case \cite{GaJaRa}.

    \smallskip

The notation we use is standard. The norm in a Banach space $X$ is denoted by $||\cdot||$.  The dual space of $X$ is denoted by
$X^*$ and its dual norm by $||\cdot||^*$. The open ball with center $x\in X$ and radius $r>0$ is denoted by $B(x,r)$. A $C^k$ smooth bump function $b:X\to \Real$ is a $C^k$ smooth function on $X$ with bounded, non-empty support, where $\supp(b) =\overline{\{x \in  X : b(x)\neq 0\}}$.
If $M$ is a Banach manifold, we denote by $T_xM$ the tangent space of $M$ at $x$. Recall that the tangent bundle of $M$ is $TM=\{(x,v):x\in M \text{ and } v\in T_x M\}$. We refer to \cite{DGZ}, \cite{fabianhajek}, \cite{Lang} and \cite{Deim}  for additional definitions. We will say that the norms $||\cdot||_{1}$
and   $||\cdot||_{2}$ defined on a Banach space $X$
 are $K$-equivalent ($K\ge 1$) whether $\frac{1}{K}||v||_1\le||v||_2\le K||v||_1$, for every
$v\in X$.

\smallskip

Let us begin by recalling the definition of  a  $C^k$ Finsler manifold  in the sense of Palais as well as some basic properties (for more information about these manifolds see  \cite{Palais},  \cite{Deim}, \cite{Rabier}, \cite{Neeb}, \cite{GaGuJa} and  \cite{MarLuis}).

\begin{defn} \label{defFinsler}
Let $M$ be a (paracompact) $C^k$ Banach manifold modeled on a Banach space $(X,||\cdot||)$, where $k \in\Natural\cup \{\infty\}$. Let us consider the   tangent  bundle $TM$
of $M$ and  a continuous  map $||\cdot||_M: TM\to [0,\infty)$. We say that   $(M,||\cdot||_M)$ is a \textbf{$C^k$ Finsler manifold in the sense of Palais} if $||\cdot||_M$ satisfies the following conditions:
\begin{enumerate}
\item[(P1)] For every $x\in M$, the map $||\cdot||_x:={||\cdot||_M}_{\mid_{T_xM}}:T_xM\to [0,\infty)$ is a norm on the tangent space $T_xM$ such that for every chart $\varphi:U\to X$ with $x \in U$, the norm $v\in X \mapsto ||d\varphi^{-1}(\varphi(x))(v)||_x$ is equivalent to $||\cdot||$ on $X$.
\item[(P2)] For every $x_0\in M$, every $\varepsilon>0$ and every chart $\varphi:U\to X$ with $x_0\in U$, there is an open neighborhood $W$ of $x_0$  such that if  $x\in W$ and $v\in X$, then
\begin{equation}\label{palaisdef}
\frac{1}{1+\varepsilon}||d\varphi^{-1}(\varphi(x_0))(v)||_{x_0}\le ||d\varphi^{-1}(\varphi(x))(v)||_{x}\le (1+\varepsilon)||d\varphi^{-1}(\varphi(x_0))(v)||_{x_0}.
\end{equation}
In terms of equivalence of norms, the above inequalities yield the fact that the norms $||d\varphi^{-1}(\varphi(x))(\cdot)||_{x}$
and   $||d\varphi^{-1}(\varphi(x_0))(\cdot)||_{x_0}$ are $(1+\varepsilon)$-equivalent.
\end{enumerate}
\end{defn}

Let us recall  that  Banach spaces and Riemannian manifolds   are  $C^\infty$ Finsler manifolds in the sense of Palais \cite{Palais}.

Let $M$ be a  Finsler manifold, we denote by  $T_xM^*$ the dual space of the tangent space $T_xM$. Let $f:M\to\Real$ be a differentiable function at $p\in M$. The norm of $df(p)\in {T_p M}^*$ is given by
\begin{equation*}
||df(p)||_p= \sup\{|df(p)(v)|:v\in T_p M, ||v||_p\le 1\}.
\end{equation*}
Let us consider a differentiable function  $f:M\to N$  between Finsler manifolds $M$ and $N$. The norm of the derivative at the point $p\in M$ is defined as
\begin{align*}
& ||df(p)||_p= \sup\{||df(p)(v)||_{f(p)}:v\in T_p M, ||v||_p\le 1\} = &\\
& \qquad \qquad =\sup\{\xi(df(p)(v)):\xi\in {T_{f(p)} N}^*,\ v\in T_pM\ \text{and}\ ||v||_{p}=1=||\xi||^*_{f(p)} \},&
\end{align*}
where $||\cdot||_{f(p)}^*$ is the dual norm  of $||\cdot||_{f(p)}$. Recall that if $(M, ||\cdot||_M)$ is a
 Finsler manifold, the {\em length} of a piecewise $C^1$ smooth
path $c:[a,b]\rightarrow M$ is defined as $\ell(c):=\int_{a}^b||c'(t)||_{c(t)}\,dt$. Besides, if $M$ is connected, then it is
connected by piecewise $C^1$ smooth paths, and the associated
{\em Finsler metric} $d_M$ on $M$ is defined as
\begin{equation*}
d_M(p,q)=\inf\{\ell(c): \, c \text{ is a piecewise } C^1 \text{ smooth path connecting } p \text{ and } q\}.
\end{equation*}
It was shown in \cite{Palais} that the Finsler metric is consistent with the topology given in $M$. The open ball of center $p\in M$ and radius
$r>0$ is  denoted by $B_M(p,r):=\{q\in M:\, d_M(p,q)<r\}$. The Lipschitz constant $\Lip(f)$ of a Lipschitz function $f:M\rightarrow N$, where
$M$ and $N$ are Finsler manifolds, is defined as $\Lip(f)=\sup\{\frac{d_N(f(x),f(y))}{d_M(x,y)}: x,y \in M, x\not=y\}$. We shall only consider connected manifolds. Let us recall the following ``mean value inequality"  for Finsler manifolds \cite{AzFeMe, MarLuis}.

 \begin{lem}  \label{meanvalue} Let $M$ and $N$ be $C^1$ Finsler manifolds (in the sense of Palais)
and $f:M\to N$  a  $C^1$ smooth  function. Then,  $f$ is Lipschitz if and only if  $||df||_\infty := \sup\{||df(x)||_x:x\in M\}<\infty$. Furthermore, $\Lip(f)= ||df||_\infty$.
\end{lem}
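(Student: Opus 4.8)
The plan is to prove the two inequalities $\Lip(f)\le ||df||_\infty$ and $||df||_\infty\le \Lip(f)$ separately; together they give both the equivalence and the equality.

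First I would establish the easy inequality $\Lip(f)\le ||df||_\infty$, assuming $||df||_\infty<\infty$. Given $p,q\in M$ and any piecewise $C^1$ path $c:[a,b]\to M$ from $p$ to $q$, the composition $f\circ c$ is a piecewise $C^1$ path in $N$ from $f(p)$ to $f(q)$, and by the chain rule $(f\circ c)'(t)=df(c(t))(c'(t))$. Hence, using the definition of the operator norm $||df(c(t))||_{c(t)}$,
\[\ell(f\circ c)=\int_a^b||df(c(t))(c'(t))||_{f(c(t))}\,dt\le ||df||_\infty\int_a^b||c'(t)||_{c(t)}\,dt=||df||_\infty\,\ell(c).\]
Therefore $d_N(f(p),f(q))\le \ell(f\circ c)\le ||df||_\infty\,\ell(c)$, and taking the infimum over all such paths $c$ yields $d_N(f(p),f(q))\le ||df||_\infty\, d_M(p,q)$, i.e. $\Lip(f)\le ||df||_\infty$ (in particular $f$ is Lipschitz).

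The core of the argument is an infinitesimal recovery of the Finsler norm from the metric: for a $C^1$ curve $c:(-\delta,\delta)\to M$ with $c(0)=x$ and $c'(0)=v$, I claim
\[\lim_{t\to 0^+}\frac{d_M(c(t),x)}{t}=||v||_x.\]
The upper bound is immediate, since $d_M(c(t),x)\le \ell(c|_{[0,t]})=\int_0^t||c'(s)||_{c(s)}\,ds$ and the integrand is continuous at $0$ with value $||v||_x$. For the lower bound I would fix $\varepsilon>0$ and a chart $\varphi:U\to X$ at $x$; by condition (P2) there is a neighborhood $W\subseteq U$ of $x$ on which the Finsler norm, read in the chart, is $(1+\varepsilon)$-equivalent to the fixed model norm $|\cdot|_0:=||d\varphi^{-1}(\varphi(x))(\cdot)||_x$. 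Choosing $\rho>0$ so that the closed model-ball $\{y:|\varphi(y)-\varphi(x)|_0\le\rho\}$ lies in $W$, I would split any path $\gamma$ from $x$ to $c(t)$ into two cases: if $\gamma$ stays in this ball then $\ell(\gamma)\ge \frac{1}{1+\varepsilon}|\varphi(c(t))-\varphi(x)|_0$, because in a normed space the length of a path dominates the distance of its endpoints; if $\gamma$ leaves the ball then, cutting $\gamma$ at its first exit point (which lies on the model-sphere of radius $\rho$ and before which $\gamma$ stays in $W$), one gets $\ell(\gamma)\ge \frac{\rho}{1+\varepsilon}$. For $t$ small enough that $|\varphi(c(t))-\varphi(x)|_0<\rho$ the first bound is the binding one in both cases, so $d_M(c(t),x)\ge \frac{1}{1+\varepsilon}|\varphi(c(t))-\varphi(x)|_0$. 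Dividing by $t$, letting $t\to 0^+$ and using $|\varphi(c(t))-\varphi(x)|_0/t\to |d\varphi(x)(v)|_0=||v||_x$, then letting $\varepsilon\to 0$, gives the lower bound.

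Finally, to prove $||df||_\infty\le \Lip(f)$ assuming $f$ is Lipschitz, I would fix $x\in M$ and a unit vector $v\in T_xM$, realize $v$ as $c'(0)$ for a $C^1$ curve $c$ (e.g. $c(t)=\varphi^{-1}(\varphi(x)+t\,d\varphi(x)(v))$), and apply the infinitesimal recovery both in $M$ along $c$ and in $N$ along $f\circ c$, noting $(f\circ c)'(0)=df(x)(v)$. Combining these with the Lipschitz estimate $d_N(f(c(t)),f(x))\le \Lip(f)\,d_M(c(t),x)$ and dividing by $t$ gives
\[||df(x)(v)||_{f(x)}=\lim_{t\to 0^+}\frac{d_N(f(c(t)),f(x))}{t}\le \Lip(f)\lim_{t\to 0^+}\frac{d_M(c(t),x)}{t}=\Lip(f)\,||v||_x=\Lip(f).\]
Taking the supremum over unit $v$ and then over $x$ yields $||df||_\infty\le \Lip(f)$, and together with the first inequality we obtain $\Lip(f)=||df||_\infty$. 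The main obstacle is the lower bound in the infinitesimal recovery: one must rule out that the Finsler distance to $c(t)$ is achieved (or approximated) by paths wandering outside the chart neighborhood $W$, which is precisely what the case analysis above, powered by the Palais condition (P2), is designed to handle.
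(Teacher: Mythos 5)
Your proposal is correct. Note that the paper itself does not prove this lemma; it quotes it from \cite{AzFeMe, MarLuis}, and your argument is essentially the standard one found there: the inequality $\Lip(f)\le ||df||_\infty$ by integrating along paths, and the converse by recovering the Finsler norm as the metric derivative of a curve. The only remark worth making is that the technical heart of your argument --- the lower bound $d_M(c(t),x)\ge \frac{1}{1+\varepsilon}|\varphi(c(t))-\varphi(x)|_0$ for small $t$, obtained via the first-exit-time case analysis --- is precisely the right-hand inequality of \eqref{B-L1} in Lemma \ref{desigualdades:BiLipschitz} (quoted from \cite[Lemma 2.4]{MarLuis}), so you could shorten the proof considerably by invoking that lemma directly instead of reproving it; on the other hand, your self-contained version correctly handles the one genuinely delicate point, namely that competitor paths may leave the chart neighborhood. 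All the individual steps (the chain rule computation, the $(1+\varepsilon)$-comparison of norms from (P2), the passage from unit vectors to the operator norm by homogeneity) check out.
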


We will also need the following result related to the  $(1+\varepsilon)$-bi-Lipschitz local behavior of the charts of a $C^1$ Finsler manifold in the sense of Palais  \cite[Lemma 2.4]{MarLuis}.

\begin{lem} \label{desigualdades:BiLipschitz}
Let us consider a  $C^1$ Finsler manifold $M$ (in the sense of Palais). Then, for every $x_0\in M$ and every chart
$(U,\varphi)$  with $x_0\in U$ satisfying inequality \eqref{palaisdef}, there exists an open neighborhood
 $V\subset U$ of $x_0$ satisfying
\begin{equation}\label{B-L1}
\frac{1}{1+\varepsilon}d_M(p,q)\le |||\varphi(p)-\varphi(q)|||\le (1+\varepsilon) d_M(p,q), \quad \text{ for every } p,q\in V,
\end{equation}
where   $|||\cdot|||$ is the (equivalent) norm $||d\varphi^{-1}(\varphi(x_0))(\cdot)||_{x_0}$ defined  on $X$.
\end{lem}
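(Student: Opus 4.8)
The plan is to compare the length of a path in $M$ with the $|||\cdot|||$-length of its image under $\varphi$, and then to localize so that, up to the factor $1+\varepsilon$, the infimum defining $d_M$ is realized by paths that never leave the region where \eqref{palaisdef} is available. Let $W$ be the neighborhood of $x_0$ furnished by \eqref{palaisdef}. Writing that inequality with $v=w$, it says that for every $x\in W$ and $w\in X$,
\[
\tfrac{1}{1+\varepsilon}\,|||w|||\le ||d\varphi^{-1}(\varphi(x))(w)||_{x}\le (1+\varepsilon)\,|||w|||.
\]
Given a piecewise $C^1$ path $c:[a,b]\to M$ contained in $W$, I set $w(t)=(\varphi\circ c)'(t)=d\varphi(c(t))(c'(t))$, so that $c'(t)=d\varphi^{-1}(\varphi(c(t)))(w(t))$; applying the bound above pointwise and integrating yields
\[
\tfrac{1}{1+\varepsilon}\,\ell_{|||\cdot|||}(\varphi\circ c)\le \ell(c)\le (1+\varepsilon)\,\ell_{|||\cdot|||}(\varphi\circ c),
\]
where $\ell_{|||\cdot|||}$ denotes length in $(X,|||\cdot|||)$. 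I would then fix $r>0$ so small that the closed ball $\{x\in X:|||x-\varphi(x_0)|||\le r\}$ lies in $\varphi(W)$ (possible since $\varphi(W)$ is open and $|||\cdot|||$ is equivalent to $||\cdot||$), and put $V:=\varphi^{-1}(\{x\in X:|||x-\varphi(x_0)|||<r/4\})$.

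The left-hand inequality $d_M(p,q)\le (1+\varepsilon)|||\varphi(p)-\varphi(q)|||$ is the easy direction. For $p,q\in V$, I use the chart segment $\sigma(t)=(1-t)\varphi(p)+t\varphi(q)$, $t\in[0,1]$. By convexity of the ball, $\sigma$ stays in $\{|||x-\varphi(x_0)|||<r/4\}\subset\varphi(W)$, so $c:=\varphi^{-1}\circ\sigma$ is a path from $p$ to $q$ lying in $W$ with $\ell_{|||\cdot|||}(\sigma)=|||\varphi(q)-\varphi(p)|||$, and the right half of the length comparison gives $d_M(p,q)\le\ell(c)\le(1+\varepsilon)|||\varphi(p)-\varphi(q)|||$.

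The main work, and the expected obstacle, is the right-hand inequality $|||\varphi(p)-\varphi(q)|||\le(1+\varepsilon)d_M(p,q)$: here $d_M(p,q)$ is an infimum over \emph{all} paths in $M$, including those that leave $W$ and might shortcut outside the chart, so the length comparison above does not apply directly. I would show that every piecewise $C^1$ path $c$ from $p\in V$ to $q\in V$ satisfies $\ell(c)\ge\frac{1}{1+\varepsilon}|||\varphi(p)-\varphi(q)|||$ and then pass to the infimum. If $c$ stays in $\varphi^{-1}(\{|||x-\varphi(x_0)|||<r\})\subset W$, the left half of the comparison together with the elementary estimate $\ell_{|||\cdot|||}(\varphi\circ c)\ge|||\varphi(q)-\varphi(p)|||$ (a curve in a normed space is no shorter than its chord) gives the claim. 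Otherwise, let $t^*$ be the first time with $|||\varphi(c(t^*))-\varphi(x_0)|||=r$; then $c|_{[a,t^*]}$ lies in $W$, and applying the comparison and the chord estimate to this subpath gives $\ell(c)\ge\frac{1}{1+\varepsilon}|||\varphi(c(t^*))-\varphi(p)|||\ge\frac{1}{1+\varepsilon}(r-r/4)=\frac{3r}{4(1+\varepsilon)}$, whereas $|||\varphi(p)-\varphi(q)|||<r/2$, so the desired inequality holds a fortiori. The one quantitative point to get right is the choice of radius $r/4$ for $V$, which is precisely what forces an escaping path to be longer than the chord and thus disposes of the corner-cutting difficulty.
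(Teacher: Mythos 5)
The paper does not actually prove this lemma: it is imported verbatim from \cite[Lemma 2.4]{MarLuis}. Your argument is the standard one for results of this type (compare lengths of paths inside the chart using \eqref{palaisdef}, then choose the radius of $V$ small enough that any path escaping the chart is forced to be longer than the chord), and it is essentially correct; the quantitative bookkeeping ($r/4$ versus $r/2$ versus $3r/4$) checks out, as does the chord estimate $\ell_{|||\cdot|||}(\gamma)\ge |||\gamma(b)-\gamma(a)|||$.

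The one step you assert without justification is the existence of the ``first time $t^*$ with $|||\varphi(c(t^*))-\varphi(x_0)|||=r$.'' In infinite dimensions this needs an argument: the set $\varphi^{-1}\bigl(\{x:|||x-\varphi(x_0)|||\le r\}\bigr)$ is closed in $U$ but need not be closed in $M$, so a priori the path could leave the chart domain $U$ without the radial function $g(t)=|||\varphi(c(t))-\varphi(x_0)|||$ ever reaching $r$, and then $t^*$ would not exist. The fix is short and uses your own length comparison: as long as $c([a,t])$ stays in $\varphi^{-1}(B_{|||\cdot|||}(\varphi(x_0),r))\subset W$, one has $|||\varphi(c(t))-\varphi(c(s))|||\le (1+\varepsilon)\,\ell(c|_{[s,t]})$, and since $\ell(c|_{[s,t]})\to 0$ as $s,t$ approach the supremum $\tau$ of such times, $\varphi(c(t))$ is Cauchy in $X$ and converges to some $z$ with $|||z-\varphi(x_0)|||\le r$; as the closed ball of radius $r$ lies in $\varphi(W)$, this forces $c(\tau)=\varphi^{-1}(z)\in U$ and, if $\sup_{t<\tau}g(t)<r$, that $c(\tau)$ is interior to $\varphi^{-1}(B_{|||\cdot|||}(\varphi(x_0),r))$, contradicting maximality of $\tau$ unless the path never leaves at all. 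Hence either the path stays in the chart ball (your first case) or $\sup g=r$, which is all your estimate $\ell(c)\ge\frac{1}{1+\varepsilon}(r-r/4)$ requires. With that justification supplied, the proof is complete.
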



Now, let us  recall the concept of {\em uniformly bumpable manifold},  introduced by D. Azagra, J. Ferrera and F. L\'opez-Mesas \cite{AzFeMe} for Riemannian manifolds.  A natural extension to {Finsler manifolds} is defined in the same way \cite{MarLuis}.
\begin{defn} \label{bumpable}
A  $C^k$  Finsler manifold in the sense of Palais $M$  is \textbf{$C^k$ uniformly bumpable} whenever there are $R>1$ and $r>0$ such that for every $p\in M$ and $\delta\in (0,r)$ there exists a $C^k$ smooth function $b:M\to[0,1]$ such that:
\begin{enumerate}
\item $b(p)=1$,
\item $b(q)=0$ whenever $d_M(p,q)\ge \delta$,
\item $\sup_{q\in M} ||d b(q)||_q \le R/\delta$.
\end{enumerate}
\end{defn}
Note that this is not a restrictive definition:  D. Azagra,  J.  Ferrera,   F. L\'opez-Mesas and  Y. Rangel \cite{AzFeMeRa} proved that  every separable Riemannian manifold is $C^\infty$ uniformly bumpable. This result was generalized  in \cite{MarLuis}, where it was proved that  every $C^1$ Finsler manifold (in the sense of Palais) modeled on a certain class of Banach spaces (such as Hilbert spaces, Banach spaces with separable dual, closed subspaces of $c_0(\Gamma)$ for every set $\Gamma\neq \emptyset$) is $C^1$ uniformly bumpable. In particular, every  Riemannian manifold (either separable or non-separable)  is  $C^1$ uniformly bumpable.

It is straightforward to verify that if a  $C^{k}$ Finsler manifold $M$ is  modeled on a Banach space $X$ and $M$ is  $C^k$ uniformly bumpable, then $X$ admits  a Lipschitz $C^k$ smooth bump function. Besides, a {\em separable} $C^k$ Finsler manifold $M$ is modeled on a Banach space with  a Lipschitz, $C^k$ smooth bump function if and only if $M$ is $C^k$ uniformly bumpable \cite{MarLuis}. Nevertheless, we do not know whether this equivalence  holds in the non-separable case.


From now on, we shall refer to  $C^{k}$ Finsler manifolds  in the sense of Palais as $C^{k}$ Finsler manifolds, and $k \in \Natural \cup \{\infty\}$. 
We shall use the standard notation of  $C^k(U,Y)$  for the set of  all $k$-times continuously differentiable functions defined on an open subset $U$ of a Banach space (Finsler manifold) taking values into a Banach space (Finsler manifold) $Y$. We shall write $C^k(U)$ whenever $Y=\mathbb R$.

Now, let us recall the concept of weakly  $C^k$ smooth function.

\begin{defn} Let $X$ and $Y$ be  Banach spaces and  consider a function  $f:U\to Y$, where  $U$ is an open subset of $X$. The function
 $f$ is said to be \textbf{weakly  $C^k$ smooth} at the point $x_0$ whenever there is an open neighborhood $U_{x_0}$ of $x_0$  such that $y^*\circ  f$  is $C^k$ smooth at $U_{x_0}$, for every $y^*\in Y^*$. The   function $f$ is said to be \textbf{weakly $C^k$ smooth} on $U$ whenever $f$ is weakly $C^k$ smooth at  every point $x\in U$.
 \end{defn}

On the one hand,  J. M. Guti\'errez and J.L. G. Llavona   \cite{GULLA} proved that  if  $f:U\to Y$ is weakly $C^k$ smooth on $U$, then   $g\circ f\in C^k(U)$ for all $g\in C^k(Y)$. They also proved that   if  $f:U\to Y$ is weakly $C^k$ smooth on $U$, then $f\in C^{k-1}(U)$. For $k=1$, the above yields that every weakly $C^1$ smooth function on $U$ is continuous on $U$. Also, for $k=\infty$, every weakly $C^\infty$ smooth function on $U$ is $C^\infty$ smooth on $U$.  M. Bachir and  G. Lancien \cite{BaLan} proved that, if the Banach space $Y$ has  the Schur property, then the concept of
weakly $C^k$ smoothness coincides with  the concept of $C^k$ smoothness.
On the other hand, there are examples of  weakly $C^1$ smooth functions that are not $C^1$ smooth (see \cite{GULLA} and \cite{BaLan}).

\begin{defn}
Let $M$ and $N$  be $C^{k}$  Finsler manifolds and  $U\subset M$, $O\subset N$   open subsets of $M$ and $N$, respectively.
A function $f:U\to N$ is said to be \textbf{weakly $C^k$ smooth} at the point $x_0$ of $U$ if there exist charts $(W,\varphi)$ of $M$ at  $x_0$ and $(V,\psi)$ of $N$ at $f(x_0)$ such that $ \psi\circ f \circ \varphi^{-1}$ is weakly $C^k$ smooth at $\varphi(W)$. We say that $f:U\to N$ is  \textbf{weakly $C^k$ smooth}  in $U$ if $f$ is weakly $C^k$ smooth at every point $x\in U$. 
We say that a bijection $f:U\rightarrow O$ is a weakly $C^k$ diffeomorphism if $f$ and $f^{-1}$ are weakly $C^k$ smooth on $U$ and $O$, respectively.
Notice that these definitions do not depend on the chosen charts.
\end{defn}

Let us note that there are homeomorphisms which are weakly $C^1$ smooth but not differentiable. Indeed, we follow \cite[Example 3.9]{GULLA} 
and define $g:\Real \to c_0(\Natural)$ and  $h:c_0(\Natural)\to c_0(\Natural)$ by $g(t)=(0,\frac{1}{2}\sin(2t),\dots,\frac{1}{n}\sin(nt),\dots)$ and $h(x)=x+g(x_1)$ for every $t\in \Real$ and $x=(x_1,\dots, x_n, \dots)\in c_0$.  The function $h$ is an homeomorphism, $h^{-1}(y)=y-g(y_1)$ for every $y\in c_0$, and $h$ is weakly $C^1$ smooth on $c_0(\Natural)$. Notice that if $h$ were differentiable
at a point  $x\in c_0$ with $x_1=0$, then
\begin{equation*}
h'(x)(1,0,0, \dots)=(1,1,1,\dots)Ê\in \ell_\infty \setminus c_0,
\end{equation*}
which is a contradiction.

Now, let us consider different definitions of isometries between $C^k$ Finsler manifolds.

\begin{defn}
Let $(M,||\cdot||_M)$ and $(N,||\cdot||_N)$ be $C^{k}$ Finsler manifolds and
a bijection  $h:M\to N$.
\begin{itemize}
\item[({\bf{MI}})] We say that  $h$ is a \textbf{metric isometry} for the Finsler metrics,  if
\begin{equation*}\label{metric-isometry}
d_N(h(x),h(y))=d_M(x,y), \qquad \text{ for every } x,y \in M.
\end{equation*}
\item[{\bf(FI)}]  We say that $h$ is a \textbf{$C^{k}$  Finsler isometry} if it is a $C^{k}$ diffeomorphism satisfying
\begin{equation*}
||dh(x)(v)||_{h(x)}=||(h(x),dh(x)(v))||_N=||(x,v)||_M=||v||_x,
\end{equation*}
for every $x\in M$ and $v\in T_x M$.  We  say that the Finsler manifolds $M$ and $N$ are \textbf{$C^{k}$ equivalent as Finsler manifolds} if there is a  $C^{k}$ Finsler isometry between $M$ and $N$.
\item[({\bf{$\omega$-FI}})] We say that  $h$ is a \textbf{weak $C^{k}$ Finsler isometry} if it is a weakly $C^k$   diffeomorphism and a metric isometry for the Finsler metrics.
We  say that the Finsler manifolds $M$ and $N$ are \textbf{weakly $C^k$ equivalent as Finsler manifolds} if there is a  weak $C^{k}$ Finsler isometry between $M$ and $N$.
\end{itemize}
\end{defn}

\begin{prop}\label{Finsler:Isometry}
Let $M$  and $N$ be $C^{k}$ Finsler manifolds. Let us assume that there is a  $C^k$ diffeomorphism and metric isometry
(for the Finsler metrics)  $h:M\to N$. Then $h$ is a $C^k$  Finsler isometry.
\end{prop}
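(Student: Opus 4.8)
The plan is to deduce the infinitesimal Finsler isometry from the metric isometry by recovering the Finsler norm on each tangent space as a one-sided metric derivative along $C^1$ curves. The central fact I would establish first is the following: if $c:(-\delta,\delta)\to M$ is a $C^1$ curve with $c(0)=x$ and $c'(0)=v$ (such curves exist, e.g.\ $c(t)=\varphi^{-1}(\varphi(x)+t\,d\varphi(x)(v))$ in a chart), then
\[
\lim_{t\to 0^+}\frac{d_M(x,c(t))}{t}=||v||_x.
\]
Granting this, the proposition follows at once. Since $h$ is a $C^k$ diffeomorphism with $k\ge 1$, the curve $h\circ c$ is $C^1$ in $N$ with $(h\circ c)(0)=h(x)$ and, by the chain rule, $(h\circ c)'(0)=dh(x)(v)$. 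Applying the same formula in $N$ and using that $h$ preserves the Finsler distance, $d_N(h(x),h(c(t)))=d_M(x,c(t))$, yields $||dh(x)(v)||_{h(x)}=||v||_x$ for every $x\in M$ and $v\in T_xM$, which is exactly the defining condition of a $C^k$ Finsler isometry.

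To prove the metric-derivative formula I would fix $\varepsilon>0$ and a chart $(U,\varphi)$ at $x$ as in Definition \ref{defFinsler}, and invoke Lemma \ref{desigualdades:BiLipschitz} to obtain a neighborhood $V\subset U$ of $x$ on which
\[
\frac{1}{1+\varepsilon}\,d_M(p,q)\le |||\varphi(p)-\varphi(q)|||\le (1+\varepsilon)\,d_M(p,q),
\]
where $|||\cdot|||=||d\varphi^{-1}(\varphi(x))(\cdot)||_x$. For $t$ small enough $c(t)\in V$, so dividing through by $t>0$ I would sandwich $d_M(x,c(t))/t$ between $(1+\varepsilon)^{-1}$ and $(1+\varepsilon)$ times $|||\varphi(c(t))-\varphi(x)|||/t$. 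Since $\varphi\circ c$ is differentiable at $0$, the difference quotient $(\varphi(c(t))-\varphi(x))/t$ converges to $d\varphi(x)(v)$ in $X$, so its $|||\cdot|||$-norm converges to $|||d\varphi(x)(v)|||$. Passing to the $\liminf$ and $\limsup$ in the sandwich gives
\[
\frac{1}{1+\varepsilon}|||d\varphi(x)(v)|||\le \liminf_{t\to 0^+}\frac{d_M(x,c(t))}{t}\le \limsup_{t\to 0^+}\frac{d_M(x,c(t))}{t}\le (1+\varepsilon)|||d\varphi(x)(v)|||.
\]
As $\varepsilon>0$ is arbitrary the limit exists and equals $|||d\varphi(x)(v)|||$, and by the definition of $|||\cdot|||$ one has $|||d\varphi(x)(v)|||=||d\varphi^{-1}(\varphi(x))(d\varphi(x)(v))||_x=||v||_x$, establishing the claim.

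The chart computation and the limit manipulation are routine; the only point requiring care is the handling of $\varepsilon\to 0$. Because the neighborhood $V$, and hence the admissible range of $t$, shrinks as $\varepsilon$ decreases, the estimate must be organized as a $\liminf/\limsup$ sandwich for each fixed $\varepsilon$ and only afterwards let $\varepsilon\to 0$, rather than attempting to pass both limits simultaneously. I expect this to be the main (though mild) obstacle; everything else is a direct consequence of Lemma \ref{desigualdades:BiLipschitz} together with the chain rule applied to $h\circ c$.
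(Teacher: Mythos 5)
Your proposal is correct, and it reaches the conclusion by a route that is organized differently from the paper's, although both arguments ultimately rest on Lemma \ref{desigualdades:BiLipschitz}. The paper fixes $\varepsilon>0$, takes bi-Lipschitz charts $\varphi$ at $x$ and $\psi$ at $h(x)$, and uses the metric isometry to show that the coordinate representation $\psi\circ h\circ\varphi^{-1}$ is locally $(1+\varepsilon)^2$-Lipschitz for the norms $|||\cdot|||_x$ and $|||\cdot|||_y$; this bounds the operator norm of its differential, giving $||dh(x)(v)||_{h(x)}\le(1+\varepsilon)^2||v||_x$, and the reverse inequality is obtained by running the same argument for $h^{-1}$. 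You instead isolate an intrinsic characterization of the Finsler norm as a one-sided metric derivative, $\lim_{t\to 0^+}d_M(x,c(t))/t=||c'(0)||_x$ along $C^1$ curves, prove it by the same sandwich coming from Lemma \ref{desigualdades:BiLipschitz} (with the correct care about taking $\liminf/\limsup$ first and $\varepsilon\to 0$ afterwards), and then conclude by the chain rule applied to $h\circ c$. Your version buys two small advantages: the metric-derivative formula is an exact limit rather than a one-sided estimate, so you get the equality $||dh(x)(v)||_{h(x)}=||v||_x$ in a single pass without invoking $h^{-1}$, and the formula is a reusable statement of independent interest. The paper's version stays closer to the operator-norm language it needs elsewhere (e.g.\ in Proposition \ref{isom}). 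Both are complete proofs; no gap in yours.
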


\begin{proof}
Let us fix $x\in M$ and $y=h(x)\in N$. For every $\varepsilon>0$, there are $r>0$ and charts $\varphi:B_M(x,r)\subset M \to X$ and  $\psi: B_N(y,r) \subset N \to Y$ satisfying  inequalities  \eqref{palaisdef} and \eqref{B-L1}. Since $h:M \to N$ is a metric isometry,  $h$ is a bijection from $B_M(x,r)$ onto $B_N(y,r)$.

Let us consider the equivalent norms on $X$ and $Y$ defined as
$
|||\cdot|||_x:=||d\varphi^{-1}(\varphi(x))(\cdot)||_x $ and $|||\cdot|||_y=||d\psi^{-1}(\psi(y))(\cdot)||_y,$  respectively.

Since $h$ is a metric isometry, we obtain from Lemma \ref{desigualdades:BiLipschitz},  for $p,q$ in an open neighborhood
of $\varphi(x)$,
\begin{align*}
|||\psi\circ h\circ \varphi^{-1}(p)-\psi\circ h\circ  \varphi^{-1}(q)|||_y  \le  (1+\varepsilon)d_N(h\circ \varphi^{-1}(p),  h\circ \varphi^{-1}(q)) = &\\
 = (1+\varepsilon)d_M(\varphi^{-1}(p),\varphi^{-1}(q))\le (1+\varepsilon)^2|||p-q|||_x. &
\end{align*}
Thus, $\sup\{|||d(\psi\circ h\circ \varphi^{-1})(\varphi(x))(w)|||_y: |||w|||_x\le 1 \}\le (1+\varepsilon)^2$.
Now, for every $v\in T_x M$ with $v\neq 0$, let us write $w=d\varphi(x)(v)\in X$. We have
\begin{align*}
||dh(x)(v)||_y & =||d\psi^{-1}(\psi(y))d\psi(y)dh(x)(v)||_y=|||d (\psi\circ h) (x)(v)|||_y = \\
& = ||| d (\psi\circ h) (x) d\varphi^{-1}(\varphi(x))(w)|||_y
  = ||| d(\psi\circ h \circ \varphi^{-1}) (\varphi(x))(w)|||_y \le \\
  & \le (1+\varepsilon)^2 |||w|||_x=(1+\varepsilon)^2 ||v||_x.
\end{align*}

Since this inequality holds for every $\varepsilon>0$ and the same argument works for $h^{-1}$, we conclude that $||dh(x)(v)||_y=||v||_x$ for all $v\in T_xM$. Thus,  $h$ is a $C^k$ Finsler isometry.
\end{proof}


Let us now turn  our attention to the {\em Banach algebra} $C_b^1(M)$, the algebra of all real-valued,  $C^1$ smooth and bounded functions with bounded derivative defined on a $C^1$ Finsler manifold $M$, i.e.
\begin{equation*}
C_b^1(M)=\{f:M\to\Real: f\in C^1(M),\ ||f||_\infty<\infty \text{ and } ||df||_\infty<\infty\},
\end{equation*}
where $ ||f||_\infty:=\sup\{|f(x)|:\, x\in M\}$ and $||df||_\infty:=\sup\{||df(x)||_x:\, x\in M\}$.
The usual norm considered on  $C_b^1(M)$ is $||f||_{C_b^1}=\max\{||f||_\infty,||df||_\infty\}$ for every $f\in C_b^1(M)$
and $(C_b^1(M),||\cdot||_{C_b^1(M)})$ is a Banach space. Let us notice that, by Lemma \ref{meanvalue}, we have $||df||_\infty = \Lip(f)$. Recall that $(C_b^1(M),2||\cdot||_{C_b^1(M)})$ is a Banach algebra.

For   $2\le k \le  \infty$ and a $C^{k}$ Finsler manifold $M$, let us  consider the algebra  $C_b^k(M)$ of all real-valued,  $C^k$ smooth and bounded functions that have bounded first derivative, i.e.
\begin{equation*}
C_b^k(M)=\{f:M\to\Real: f\in C^k(M),\ ||f||_\infty<\infty \text{ and } ||df||_\infty<\infty\}=C^k(M)\cap C_b^1(M).
\end{equation*}
with the norm $||\cdot||_{C_b^1}$. Thus,  $C_b^k(M)$ is a subalgebra of $C_b^1(M)$. Nevertheless, it is not a Banach algebra.

A function   $\varphi:C_b^k(M)\to \Real$ ($1\le k\le \infty$) is said to be an \emph{algebra homomorphism} whether for  all $f,g\in C_b^k(M)$ and $\lambda,\eta \in \Real$,
\begin{enumerate}
\item[(i)] $\varphi(\lambda f + \eta g)=\lambda \varphi(f)+ \eta \varphi(g)$, and
\item[(ii)] $\varphi(f\cdot g)=\varphi(f)\varphi(g)$.
\end{enumerate}

Let us denote  by  $H(C_b^k(M))$  the set of all nonzero algebra homomorphisms, i.e.
\begin{equation*}
H(C_b^k(M))=\{\varphi: C_b^k(M)\to \Real: \varphi \text{ is an algebra homomorphism and } \varphi(1)=1\}.
\end{equation*}

Let us list some of the basic properties of the algebra $C_b^k(M)$ and the algebra homomorphisms $H(C_b^k(M))$.
They can be checked as in the Riemannian case (see \cite{GaJa},  \cite{GaJaRa} and \cite{Isbell}).

\begin{itemize}
\item[(a)] If $\varphi\in H(C_b^k(M))$, then  $\varphi\not=0$ if  and only if  $\varphi(1)=1$.
\item[(b)] If $\varphi\in H(C_b^k(M))$, then $\varphi$ is positive, i.e. $\varphi(f)\ge 0$ for every $f\ge 0$.
\item[(c)] If the $C^{k}$ Finsler manifold $M$ is modeled on a Banach space that admits a Lipschitz and $C^k$ smooth bump function, then $C_b^k(M)$ is a {\em unital algebra that separates points and closed sets} of $M$.
      Let us briefly give the proof  for completeness. Let us take  $x\in M$, and $C\subset M$ a closed subset of $M$ with $x\not\in C$. Let us take $r>0$ small enough so that $C\cap  B_M(x,r) =\emptyset$ and  a chart  $\varphi:B_M(x,r)\to X$  satisfying inequality  \eqref{palaisdef}. Let us take  $s>0$ small enough so that $\varphi(x)\in B(\varphi(x),s)\subset \varphi(B(x,r/2))\subset X$ and a  Lipschitz and $C^k$ smooth bump function $b:X\to\Real$ with $b(\varphi(x))=1$ and $b(z)=0$ for every $z\not\in B(\varphi(x),s)$. Let us define $h:M\to \Real$ as $h(p)=b(\varphi(p))$ for every $p\in B_M(x,r)$ and $h(p)=0$ otherwise. Then $h\in C_b^k(M)$, $h
 (x)=1$ and $h(c)=0$ for every $c\in C$.
\item[(d)] The space $H(C_b^k(M))$ is closed as a topological subspace of $\Real^{C_b^k(M)}$ with the product topology.
 Moreover, since every function in $C_b^k(M)$ is bounded, it can be checked
that  $H(C_b^k(M))$ is compact in $\Real^{C_b^k(M)}$.
\item[(e)] If  $C_b^k(M)$ separates points and closed subsets, then $M$ can be embedded as a topological subspace of $H(C_b^k(M))$ by identifying every $x\in M$  with the \emph{point evaluation homomorphism}  $\delta_x$ given by
$\delta_x(f)=f(x)$ for every $f\in C_b^k(M)$.  Also, it can be checked that  the subset  $\delta(M)=\{\delta_x:x\in M\}$ is dense in $H(C_b^k(M))$. Therefore, it follows that $H(C_b^k(M))$ is a compactification of  $M$.
    \item[(f)] Every $f\in C_b^k(M)$ admits a continuous extension $\widehat{f}$ to  $H(C_b^k(M))$, where $\widehat{f}(\varphi)=\varphi(f)$ for every $\varphi\in H(C_b^k(M))$.  Notice that this extension $\widehat{f}$ coincides in $H(C_b^k(M))$ with the projection $\pi_f:\Real^{C_b^k(M)}\to \Real$, given by $\pi_f(\varphi)=\varphi(f)$, i.e. ${\pi_f}_{\mid_{H(C_b^k(M))}}=\widehat{f}$. In the following, we shall  identify $M$ with $\delta(M)$ in $H(C_b^k(M))$.
\end{itemize}

The next proposition can be proved in a similar way to the Riemannian case \cite{GaJaRa}.
\begin{prop} \label{countable:neigh}
Let $M$ be a complete  $C^{k}$ Finsler manifold that is $C^k$ uniformly bumpable. Then, $\varphi \in H(C_b^k(M))$ has a countable neighborhood basis in $H(C_b^k(M))$ if and only if $\varphi\in M$.
\end{prop}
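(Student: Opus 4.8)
The plan is to exploit the compactification picture supplied by properties (a)--(f): $H(C_b^k(M))$ is a compact Hausdorff space (it is a closed subspace of the Hausdorff product $\Real^{C_b^k(M)}$) in which $\delta(M)=\{\delta_x:x\in M\}$ sits as a \emph{dense} topological copy of $M$, each $f\in C_b^k(M)$ extending continuously to $\widehat f=\pi_f$. Throughout I use that $M$, being a complete $C^k$ Finsler manifold, is a complete metric space for $d_M$, and that it is $C^k$ uniformly bumpable with constants $R>1$, $r>0$ as in Definition \ref{bumpable}. I show that the points of first countability of $H(C_b^k(M))$ are exactly those of $\delta(M)$.

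For the implication ``$\varphi\in M\Rightarrow\varphi$ has a countable neighborhood basis'' I would write $\varphi=\delta_x$ and use uniform bumpability to produce, for each $n$, a function $b_n\in C_b^k(M)$ with $b_n(x)=1$, $\supp(b_n)\subset B_M(x,r_n)$ and $r_n\downarrow 0$, and set $U_n=\{\psi\in H(C_b^k(M)):\widehat{b_n}(\psi)>1/2\}$. Each $U_n$ is an open neighborhood of $\delta_x$. To check they form a basis, take a basic neighborhood $V=\{\psi:|\psi(f_i)-f_i(x)|<\varepsilon,\ i\le m\}$; by continuity of the finitely many $f_i$ at $x$ choose $n$ with $|f_i(y)-f_i(x)|<\varepsilon/2$ for all $y\in B_M(x,r_n)$ and all $i$. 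On $U_n\cap\delta(M)$ one has $b_n(y)>1/2$, hence $y\in\supp(b_n)\subset B_M(x,r_n)$, so $|\widehat{f_i}-f_i(x)|<\varepsilon/2$ there. Since $\delta(M)\cap U_n$ is dense in the open set $U_n$ and each $\widehat{f_i}$ is continuous, the estimate passes to all of $U_n$, giving $U_n\subset V$.

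For the converse I argue the contrapositive: assume $\varphi\in H(C_b^k(M))\setminus\delta(M)$ has a countable neighborhood basis, taken decreasing $V_1\supset V_2\supset\cdots$, and seek a contradiction. By density pick $x_n$ with $\delta_{x_n}\in V_n$, so $\delta_{x_n}\to\varphi$. Since $H(C_b^k(M))$ is Hausdorff and $\delta$ is a homeomorphism onto its image (property (e)), the sequence $(x_n)$ can have no convergent subsequence in $M$, for a limit $x$ would force $\delta_{x_{n_k}}\to\delta_x$ and hence $\varphi=\delta_x\in\delta(M)$. Thus $\{x_n\}$ is not relatively compact, and by completeness not totally bounded; so there are $\varepsilon_0>0$ and, extracting greedily along the tails (each tail is again non-totally-bounded for the same $\varepsilon_0$), a genuine subsequence $(x_{m_j})$ with $m_j\to\infty$, $d_M(x_{m_j},x_{m_l})\ge\varepsilon_0$ for $j\ne l$, and still $\delta_{x_{m_j}}\to\varphi$. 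I then put $\delta=\min(\varepsilon_0/3,r/2)$ and use uniform bumpability to get $b_j\in C_b^k(M)$ with $b_j(x_{m_j})=1$, $\supp(b_j)\subset B_M(x_{m_j},\delta)$ and $\|db_j\|_\infty\le R/\delta$; taking $c_j$ alternating between $0$ and $1$, the function $f=\sum_j c_j b_j$ satisfies $f(x_{m_j})=c_j$, which does not converge, contradicting $f(x_{m_j})=\widehat f(\delta_{x_{m_j}})\to\widehat f(\varphi)=\varphi(f)$.

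The main obstacle is the last construction: one must guarantee that this infinite sum of bumps is genuinely a member of $C_b^k(M)$. This is where the quantitative $\varepsilon_0$-separation is essential. Because $2\delta<\varepsilon_0\le d_M(x_{m_j},x_{m_l})$, the supports are pairwise disjoint, and in fact any ball $B_M(p,\varepsilon_0/7)$ meets at most one support; this local finiteness makes $f$ agree near each point with a single $c_j b_j$ (or with $0$), so $f\in C^k(M)$, while $\|f\|_\infty\le 1$ and the \emph{common} radius $\delta$ keeps $\|df\|_\infty\le R/\delta<\infty$. I expect the honest extraction of the $\varepsilon_0$-separated subsequence with controlled indices (from ``no convergent subsequence'' via failure of total boundedness) to be the other delicate point; everything else is routine bookkeeping with density, continuity of the $\widehat f$, and Hausdorffness of $H(C_b^k(M))$.
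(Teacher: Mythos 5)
Your proof is correct, and it is essentially the argument the paper has in mind: the paper omits the proof of Proposition \ref{countable:neigh}, deferring to the Riemannian case in \cite{GaJaRa}, where exactly this scheme is used (small uniform bumps give a countable basis at points of $\delta(M)$; for a first-countable $\varphi\notin\delta(M)$ one extracts, via completeness and failure of total boundedness, an $\varepsilon_0$-separated sequence $\delta_{x_{m_j}}\to\varphi$ and builds a locally finite sum of uniform bumps in $C_b^k(M)$ whose values oscillate along it). Your handling of the two delicate points --- passing the estimate from $U_n\cap\delta(M)$ to $U_n$ by density and continuity of the $\widehat{f_i}$, and the disjointness/local finiteness of the supports guaranteeing $f\in C_b^k(M)$ with $\|df\|_\infty\le R/\delta$ --- is sound.
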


\section{A Myers-Nakai Theorem}

Our main result is the following Banach-Stone type theorem for a certain class of Finsler manifolds. It  states that the algebra structure of $C_b^k(M)$ determines the $C^k$ Finsler manifold. Recall that two normed algebras $(A,||\cdot||_A)$ and $(B,||\cdot||_B)$ are \emph{equivalent as normed algebras} whenever there exists an algebra isomorphism $T:A\to B$  satisfying $||T(a)||_B=||a||_A$ for every $a\in A$.
Let us begin by defining the class of Banach spaces where the Finsler manifolds shall be modeled.


\begin{defn} A Banach space $(X,||\cdot||)$ is said to be \textbf{k-admissible} if for every equivalent norm $|\cdot|$ and $\varepsilon>0$, there are an open subset $B\supset \{x\in X: |x|\le 1\}$ of $X$ and  a $C^k$ smooth function $g:B\to \Real$ such that
\begin{enumerate}
\item[(i)] $|g(x)-|x||<\varepsilon$ for $x\in B$, and
\item[(ii)] $\Lip(g)\le (1+\varepsilon)$ for the norm $|\cdot|$.
\end{enumerate}
\end{defn}

It is easy to prove the following lemma.

\begin{lem} Let $X$ be a Banach space with one of the following properties:
\begin{enumerate}
\item[(A.1)] Density of the set of equivalent $C^k$ smooth norms: every equivalent norm on $X$ can be approximated in the Hausdorff metric
by equivalent $C^k$ smooth norms \cite{DGZ}.
\item[(A.2)]  $C^k$-fine approximation property ($k\ge 2$) and density of the set of equivalent $C^1$ smooth norms:
For every $C^1$ smooth function $f:X\rightarrow \mathbb R$ and every $\varepsilon>0$,
there is a $C^k$ smooth function $g:X\rightarrow \mathbb R$ satisfying $|f(x)-g(x)|<\varepsilon$  and  $||f'(x)-g'(x)||<\varepsilon$
for all $x\in X$
(see \cite{HJ}, \cite{Azfrygiljarlovo} and \cite{Moulis}); also, every equivalent norm defined on $X$ can be approximated in the Hausdorff metric
by equivalent $C^1$ smooth norms (see \cite[Theorem II 4.1]{DGZ}).
\end{enumerate}
Then $X$ is k-admissible.
\end{lem}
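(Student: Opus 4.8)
The plan is to show that each of the two hypotheses (A.1) and (A.2) implies the $k$-admissibility condition, which asks, for an arbitrary equivalent norm $|\cdot|$ and $\varepsilon>0$, for an open set $B$ containing the closed unit ball $\{|x|\le 1\}$ together with a $C^k$ smooth function $g$ on $B$ that approximates $|\cdot|$ uniformly within $\varepsilon$ and is $(1+\varepsilon)$-Lipschitz for the norm $|\cdot|$. The natural strategy is to produce $g$ from a $C^k$ smooth \emph{norm} that is close to $|\cdot|$, exploiting the fact that a norm is automatically $1$-Lipschitz with respect to itself and that Lipschitz constants behave well under comparison of equivalent norms.

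Under hypothesis (A.1), I would first invoke the density of equivalent $C^k$ smooth norms to pick, for a parameter $\eta>0$ to be fixed later, an equivalent $C^k$ smooth norm $N(\cdot)$ with $\frac{1}{1+\eta}|x|\le N(x)\le (1+\eta)|x|$ for all $x\in X$ (this is exactly what closeness in the Hausdorff metric of the unit balls provides). Setting $g:=N$ and $B:=X$, condition (i) follows because on the region where $|x|$ is bounded (say $|x|\le 2$, which covers the unit ball) the multiplicative estimate $|N(x)-|x||\le \eta\,|x|$ can be made smaller than $\varepsilon$ by choosing $\eta$ small; to keep this uniform I would instead take $B$ to be a bounded open set such as $\{|x|<2\}$ rather than all of $X$, so that $|x|$ is bounded on $B$ and the additive error is controlled. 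For condition (ii), since $N$ is a norm it is $1$-Lipschitz for itself, hence $\Lip(g)$ measured in $|\cdot|$ is at most $(1+\eta)$ by the norm comparison; choosing $\eta\le\varepsilon$ gives $\Lip(g)\le 1+\varepsilon$.

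Under hypothesis (A.2) the argument runs in two stages. I would first use the density of equivalent $C^1$ smooth norms to obtain an equivalent $C^1$ smooth norm $N$ that is $(1+\eta)$-equivalent to $|\cdot|$, exactly as above, so that $N$ is uniformly close to $|\cdot|$ on the bounded set $B=\{|x|<2\}$ and has $|\cdot|$-Lipschitz constant at most $1+\eta$. Then I would apply the $C^k$-fine approximation property to $N$ (restricted suitably, or using that $N$ is globally $C^1$) to get a $C^k$ smooth $g$ with $|g(x)-N(x)|<\eta$ and $\|g'(x)-N'(x)\|<\eta$ for all $x$. The first estimate combined with the closeness of $N$ to $|\cdot|$ yields (i) after a triangle inequality and an appropriate choice of $\eta$; the derivative estimate, together with $\|N'(x)\|_{|\cdot|}\le 1+\eta$ (the operator norm of the derivative of an $N$-$(1+\eta)$-bi-Lipschitz-to-$|\cdot|$ function) and the mean value inequality, gives $\Lip(g)\le 1+\eta+\eta\cdot C\le 1+\varepsilon$, where $C$ absorbs the equivalence constant relating the dual norms of $|\cdot|$ and the reference norm; again this is arranged by taking $\eta$ small enough.

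The main technical point to watch is bookkeeping of the three error sources in the second case — the gap between $|\cdot|$ and $N$, the uniform gap between $g$ and $N$, and the derivative gap — and ensuring they are all measured against the \emph{same} norm $|\cdot|$ so that the final Lipschitz bound is genuinely $(1+\varepsilon)$ rather than $(1+\varepsilon)$ times some equivalence constant. Because the $C^k$-fine approximation controls $\|f'-g'\|$ in the dual of the \emph{original} norm of $X$, I must convert this into a bound for $\|g'(x)\|_{|\cdot|}^*$, which costs a factor comparable to the $|\cdot|$-to-$\|\cdot\|$ equivalence constant; the clean way to neutralize this is to choose $\eta$ small \emph{after} fixing that constant. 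Everything else is the routine triangle-inequality estimation sketched above, and restricting attention to the bounded open neighborhood $B$ of the unit ball keeps the additive approximation of the norm uniform. This is why the lemma is, as the authors say, easy: both cases reduce to approximating $|\cdot|$ by a smooth norm and then, if necessary, smoothing that norm further while retaining Lipschitz control.
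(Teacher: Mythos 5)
Your overall strategy (approximate $|\cdot|$ by an equivalent smooth norm, use $1$-Lipschitzness of a norm for itself plus norm comparison, and in case (A.2) follow with a $C^k$-fine approximation) is the natural one, and the paper itself offers no proof to compare against, only the remark that the lemma is easy. However, as written your argument has a genuine gap at the origin. The set $B$ in the definition of $k$-admissibility must contain the \emph{closed} unit ball $\{x:|x|\le 1\}$, hence the point $0$, and the function $g$ must be $C^k$ smooth on all of $B$. A norm is never differentiable at $0$ (differentiability there together with positive homogeneity would force the derivative to vanish and the norm to be $o(\|x\|)$), and the phrase ``equivalent $C^k$ smooth norm'' in (A.1)/(A.2) means, as usual, smooth away from the origin. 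So setting $g:=N$ on $B=\{|x|<2\}$ does not produce a $C^k$ function on $B$; and in case (A.2) the claim that ``$N$ is globally $C^1$'' is false, so the fine approximation property, stated for $C^1$ functions on all of $X$, cannot be applied to $N$ directly. This is precisely why the definition of $k$-admissibility asks for a smooth \emph{function} close to the norm rather than a smooth norm.

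The repair is standard but must be made explicit: compose with a radial truncation. Take a $C^\infty$ function $\phi:[0,\infty)\to\Real$ with $0\le\phi'\le 1$, $\phi\equiv 0$ on $[0,a]$ and $\phi(t)=t-c_0$ for $t\ge 2a$ (so $|\phi(t)-t|\le 2a$ for all $t\ge 0$ and $\Lip(\phi)\le 1$), and set $g:=\phi\circ N$. Then $g$ vanishes on the neighborhood $\{N<a\}$ of the origin, hence is $C^k$ on all of $X$; moreover $\Lip_{|\cdot|}(g)\le\Lip(\phi)\,\Lip_{|\cdot|}(N)\le 1+\eta$ and $|g(x)-|x||\le 2a+\eta|x|$, which is $<\varepsilon$ on the bounded set $B$ for $a,\eta$ small. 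In case (A.2) you should perform this truncation on the $C^1$ smooth norm \emph{first}, obtaining a globally $C^1$, Lipschitz function to which the fine approximation property legitimately applies, and only then run your (correct) bookkeeping of the three error terms, including the conversion of the derivative bound from the reference dual norm to the $|\cdot|$-dual norm at the cost of a fixed equivalence constant absorbed by shrinking $\eta$. With that modification both cases go through.
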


Banach spaces satisfying condition (A.2) are, for instance, separable Banach spaces with a Lipschitz $C^k$ smooth bump function.
  Banach spaces satisfying condition (A.1) for $k=1$ are, for instance, Weakly Compactly Generated (WCG) Banach spaces  with a $C^1$ smooth bump function.

\begin{thm}\label{Myers:Nakai}
Let $M$ and $N$ be complete $C^k$ Finsler manifolds that are $C^k$ uniformly bumpable  and are modeled on $k$-admissible  Banach spaces.
Then $M$ and $N$ are weakly $C^k$ equivalent as Finsler manifolds if and only if  $C_b^k(M)$ and $C_b^k(N)$ are equivalent as normed algebras. Moreover, every normed algebra isomorphism $T:C_b^k(N)\to C_b^k(M)$ is of the form $T(f)=f\circ h$ where $h:M\to N$ is a weak $C^k$ Finsler isometry. In particular,  $h$ is a $C^{k-1}$ Finsler isometry whenever $k\ge 2$.
\end{thm}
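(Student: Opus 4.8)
The plan is to prove both implications, the forward (algebraic) direction carrying essentially all of the difficulty.

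\emph{Sufficiency.} Suppose first that $h\colon M\to N$ is a weak $C^k$ Finsler isometry and set $T(f)=f\circ h$ for $f\in C_b^k(N)$. Since $h$ is weakly $C^k$ smooth, the composition result of Guti\'errez and Llavona \cite{GULLA}, applied in charts, gives $f\circ h\in C^k(M)$ for every $f\in C^k(N)$; and since $h$ is a bijective metric isometry, $\|f\circ h\|_\infty=\|f\|_\infty$ and $\Lip(f\circ h)=\Lip(f)$, so by Lemma \ref{meanvalue} we get $\|d(f\circ h)\|_\infty=\|df\|_\infty$ and hence $\|T(f)\|_{C_b^1}=\|f\|_{C_b^1}$. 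As $T$ is plainly an algebra homomorphism with inverse $g\mapsto g\circ h^{-1}$, it is a normed algebra isomorphism.

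\emph{Necessity: recovering $h$.} Now let $T\colon C_b^k(N)\to C_b^k(M)$ be a normed algebra isomorphism. I would pass to the adjoint $T^*\colon H(C_b^k(M))\to H(C_b^k(N))$, $T^*(\varphi)=\varphi\circ T$, a homeomorphism of the compact homomorphism spaces by property (d). Being a homeomorphism, $T^*$ preserves the property of possessing a countable neighborhood basis, so by Proposition \ref{countable:neigh} it carries $M$ bijectively onto $N$; identifying $M$ with $\delta(M)$ and $N$ with $\delta(N)$ (properties (e), (f)) this produces a homeomorphism $h\colon M\to N$ with $T^*(\delta_x)=\delta_{h(x)}$, equivalently $T(f)=f\circ h$ for all $f\in C_b^k(N)$.

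\emph{Metric isometry (the main obstacle).} The crux is to read the Finsler metric off the normed-algebra data. I would work with $D_M(\varphi,\psi)=\sup\{|\widehat f(\varphi)-\widehat f(\psi)|: f\in C_b^k(M),\ \|f\|_{C_b^1}\le 1\}$ and its analogue $D_N$; since $T$ preserves the norm and $\widehat{Tf}(\varphi)=\widehat f(T^*\varphi)$, one checks $D_M=D_N\circ(T^*\times T^*)$, so $h$ is a $D$-isometry. It then remains to compute $D$ on point evaluations. The bound $D_M(\delta_x,\delta_y)\le\min\{d_M(x,y),2\}$ is immediate from $|f(x)-f(y)|\le\Lip(f)\,d_M(x,y)=\|df\|_\infty\,d_M(x,y)$ and $|f(x)-f(y)|\le 2\|f\|_\infty$ (Lemma \ref{meanvalue}). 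The reverse inequality is the heart of the matter: for $x,y$ close I must produce $f\in C_b^k(M)$ with $\|f\|_\infty\le 1$, $\Lip(f)\le 1+\varepsilon$, and $f(x)-f(y)$ nearly equal to $d_M(x,y)$. The natural (merely Lipschitz) candidate is a truncation of $z\mapsto d_M(y,z)-\tfrac12 d_M(x,y)$; to smooth it I would use Lemma \ref{desigualdades:BiLipschitz} to replace the distance locally by an equivalent norm read in a chart, invoke $k$-admissibility of the model space to approximate that norm by a $C^k$ function of Lipschitz constant $\le 1+\varepsilon$, and use $C^k$ uniform bumpability to truncate and globalize while keeping the Lipschitz constant controlled. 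This smooth, Lipschitz-faithful reconstruction of the truncated distance—where completeness, uniform bumpability and $k$-admissibility all enter—is the principal technical difficulty. Once $D_M(\delta_x,\delta_y)=\min\{d_M(x,y),2\}$ is secured, $h$ is a local isometry near the diagonal, and since the Finsler metric is an inner length metric consistent with the topology, a length/geodesic argument upgrades this to $d_N(h(x),h(y))=d_M(x,y)$ for all $x,y$.

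\emph{Smoothness and conclusion.} To see that $h$ is a weakly $C^k$ diffeomorphism, with $N$ modeled on $Y$, fix charts $(W,\varphi)$ at $x_0$ and $(V,\psi)$ at $h(x_0)$ and take $y^*\in Y^*$. Multiplying the coordinate function $y^*\circ\psi$ by a $C^k$ bump equal to $1$ near $h(x_0)$ yields $g\in C_b^k(N)$, and $T(g)=g\circ h\in C^k(M)$ coincides with $y^*\circ\psi\circ h$ near $x_0$; hence $y^*\circ(\psi\circ h\circ\varphi^{-1})$ is $C^k$ for every $y^*$, i.e. $h$ is weakly $C^k$ smooth, and the same argument for $T^{-1}$ treats $h^{-1}$. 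Together with the metric isometry this makes $h$ a weak $C^k$ Finsler isometry. Finally, for $k\ge 2$ the Guti\'errez--Llavona theorem \cite{GULLA} gives that $h$ and $h^{-1}$ are $C^{k-1}$ smooth, so $h$ is a $C^{k-1}$ diffeomorphism and metric isometry, and Proposition \ref{Finsler:Isometry} (with $k-1$ in place of $k$) shows $h$ is a $C^{k-1}$ Finsler isometry.
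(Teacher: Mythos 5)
Your proposal is correct in substance and follows the paper's architecture for most of the steps: the sufficiency direction via the Guti\'errez--Llavona composition theorem, the recovery of $h$ from $T$ through the compact homomorphism spaces and Proposition \ref{countable:neigh}, the weak $C^k$ smoothness of $h$ by multiplying coordinate functionals by bump functions (this is exactly the paper's Lemma \ref{weak:smooth}), and the upgrade to a $C^{k-1}$ Finsler isometry for $k\ge 2$ via Proposition \ref{Finsler:Isometry}. Where you genuinely diverge is the metric-isometry step. The paper isolates this as Proposition \ref{isom}: it shows directly that if $T(f)=f\circ h$ is merely \emph{bounded}, then $h$ is $\|T\|$-Lipschitz, by covering an almost-minimizing path $h\circ\sigma$ with finitely many charts and testing against smoothed ``cone'' functions $g_k$ built from $k$-admissibility (the ``Fact''); applying this to $T$ and $T^{-1}$, both of norm one, yields the isometry. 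You instead introduce the Gelfand-type metric $D_M(\varphi,\psi)=\sup\{|\widehat f(\varphi)-\widehat f(\psi)|:\|f\|_{C_b^1}\le 1\}$, observe it is preserved by $T^*$, compute it locally on point evaluations, and then globalize by the length structure of $d_M$. The two routes are essentially equivalent in difficulty: your lower bound for $D_M$ on nearby points requires precisely the same smoothed truncated-distance functions with Lipschitz constant $\le 1+\varepsilon$ that the paper constructs in its Fact (and this, which you only sketch, is where $k$-admissibility and the $(1+\varepsilon)$-bi-Lipschitz chart estimates of Lemma \ref{desigualdades:BiLipschitz} do the real work), and your local-to-global step is the same path-subdivision argument that appears inside the paper's proof of Proposition \ref{isom}. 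Your formulation buys a conceptually transparent statement ($T^*$ is manifestly a $D$-isometry), while the paper's buys a slightly stronger intermediate result (Lipschitz continuity of $h$ with constant $\|T\|$ under boundedness alone). One small imprecision: you state the global identity $D_M(\delta_x,\delta_y)=\min\{d_M(x,y),2\}$, but your construction only yields it for $x,y$ in a common chart (smoothing the globally truncated distance function with controlled Lipschitz constant is a much harder problem); fortunately the local version is all your length argument needs, so this is a matter of wording rather than a gap.
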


In order to prove Theorem \ref{Myers:Nakai}, we shall follow the ideas of the Riemmanian case \cite{GaJaRa}. Let us  divide the proof into several propositions.

\begin{prop}\label{isom}
Let $M$ and $N$ be $C^k$ Finsler manifolds  such that $N$ is 
modeled on  a $k$-admissible Banach space $Y$. Let $h:M\to N$ be a map such that $T:C_b^k(N)\to C_b^k(M)$ given by   $T(f)=f\circ h$ is continuous. Then $h$ is $||T||$-Lispchitz for the Finsler metrics.
\end{prop}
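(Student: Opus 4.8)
The plan is to bound the Finsler distance $d_N(h(x),h(y))$ from below by testing it against carefully chosen functions in $C_b^k(N)$ and then translating the hypothesis on $T$ into a Lipschitz estimate on $h$. The key observation is that, for a $C^k$ Finsler manifold modeled on a $k$-admissible space, the distance $d_N(y_0,\cdot)$ can be approximated (up to a $(1+\varepsilon)$-factor in the Lipschitz constant and up to small error) by $C^k$ smooth functions with bounded derivative, which therefore lie in $C_b^k(N)$ after a harmless truncation to make them bounded. Once such approximating functions are available, applying $T$ and using its continuity will produce the desired bound $d_M(x,y)\le \|T\|\,d_N(h(x),h(y))$.

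First I would fix $x,y\in M$ and set $y_0=h(x)$, $y_1=h(y)$ in $N$, and consider the function $\rho(z)=d_N(y_0,z)$ on $N$, which is $1$-Lipschitz for $d_N$ by the triangle inequality but is generally not smooth. The main step is to produce, for each $\varepsilon>0$, a function $g\in C_b^k(N)$ that approximates $\rho$ well enough on the relevant region and has $\|dg\|_\infty=\Lip(g)\le 1+\varepsilon$ (invoking Lemma \ref{meanvalue} to identify $\|dg\|_\infty$ with the Lipschitz constant). To build $g$, I would work locally in charts: by Lemma \ref{desigualdades:BiLipschitz} the chart $\psi$ is $(1+\varepsilon)$-bi-Lipschitz between $d_N$ and the model norm $|||\cdot|||$, so the $k$-admissibility of $Y$ lets me smooth the norm-distance on the chart domain with Lipschitz constant at most $(1+\varepsilon)$ in $|||\cdot|||$, hence at most $(1+\varepsilon)^2$ in $d_N$; composing with the chart and patching gives a smooth approximation of $\rho$ with controlled Lipschitz constant. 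Truncating (composing with a smooth bounded real function that is the identity near the segment of values I care about) makes $g$ bounded without increasing its Lipschitz constant, so $g\in C_b^k(N)$.

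Then I would estimate
\begin{align*}
d_N(y_0,y_1)=\rho(y_1)-\rho(y_0)&\approx g(y_1)-g(y_0)=(Tg)(y)-(Tg)(x)\\
&\le \Lip(Tg)\,d_M(x,y)\le \|T\|\,\|g\|_{C_b^k}\,d_M(x,y),
\end{align*}
where the penultimate inequality again uses Lemma \ref{meanvalue} applied to $Tg\in C_b^k(M)$, and $\|Tg\|_{C_b^k}\le\|T\|\,\|g\|_{C_b^k}$ is the continuity of $T$. Since $\|g\|_{C_b^k}\le\max\{\|g\|_\infty,1+\varepsilon\}$ and the relevant local contribution to $\|dg\|_\infty$ is at most $(1+\varepsilon)^2$, letting $\varepsilon\to 0$ and summing local estimates along a near-geodesic path connecting $x$ and $y$ yields $d_N(h(x),h(y))\le\|T\|\,d_M(x,y)$, i.e. $h$ is $\|T\|$-Lipschitz.

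I expect the main obstacle to be the smooth approximation of the distance function $\rho$ globally on $N$ with Lipschitz constant arbitrarily close to $1$: the $k$-admissibility hypothesis only gives smoothing of equivalent norms on the model space, so the distance $\rho$ must be handled one chart at a time and the local smooth pieces glued together (via a smooth partition of unity subordinate to a locally finite cover by bi-Lipschitz chart domains) while keeping the global Lipschitz constant under control. Care is needed so that the patching, the truncation making $g$ bounded, and the accumulation of the $(1+\varepsilon)$-factors from Lemmas \ref{meanvalue} and \ref{desigualdades:BiLipschitz} do not inflate $\Lip(g)$ beyond $(1+\varepsilon)^{O(1)}$; handling this along a path whose length is within $\varepsilon$ of $d_M(x,y)$ and then taking $\varepsilon\to 0$ is what finally delivers the sharp constant $\|T\|$.
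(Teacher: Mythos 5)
Your overall strategy --- testing $d_N(h(x),h(y))$ against smooth functions of controlled $C_b^k$-norm and transferring the estimate through $T$ --- is the same as the paper's, but the step you yourself flag as the main obstacle is a genuine gap, and the paper's proof is built precisely to avoid it. You need a $C^k$ approximation $g$ of the global distance function $\rho=d_N(y_0,\cdot)$ with $\Lip(g)\le 1+\varepsilon$. The $k$-admissibility hypothesis only produces, chart by chart, a $C^k$ approximation of the model norm on an open set containing its unit ball; gluing such local pieces by a smooth partition of unity does not preserve the Lipschitz constant, because the derivative of the glued function picks up terms in which the derivatives of the partition functions multiply the discrepancies between neighbouring local approximations, and controlling these uniformly requires hypotheses (uniform bumpability, separability, quantitative approximation of Lipschitz functions on the manifold) that Proposition \ref{isom} does not assume. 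Indeed, Lipschitz-constant-preserving smooth approximation on Banach--Finsler manifolds is a nontrivial theorem in its own right (it is the subject of \cite{MarLuis}), so it cannot be invoked as a routine patching step here.

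There is a second, independent problem: the norm on $C_b^k$ is $\max\{||f||_\infty,||df||_\infty\}$, so your chain $d_N(y_0,y_1)\le ||T||\,||g||_{C_b^k}\,d_M(x,y)$ yields the constant $||T||$ only if $||g||_\infty\le 1+\varepsilon$ as well. But a function approximating $d_N(y_0,\cdot)$ up to the value $d_N(y_0,y_1)$ has sup norm roughly $d_N(y_0,y_1)$, which spoils the estimate whenever this distance exceeds $1$; truncating ``near the segment of values you care about'' does not help, since that segment is all of $[0,d_N(y_0,y_1)]$. The paper resolves both issues at once: it covers the $h$-image of a near-minimizing path by finitely many chart balls of radius $r_k<1$, builds in each \emph{single} chart (no gluing) a cone-like function $g_k$ of height $r_k<1$ satisfying $d_N(z,q_k)\le(1+\varepsilon)\bigl(r_k-g_k(z)+\varepsilon/2m\bigr)$, so that $||g_k||_{C_b^1}\le(1+\varepsilon)^3$, and then chains the resulting telescoping estimates along the path. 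If you carry out your ``summing local estimates along a near-geodesic'' in detail you will be forced into essentially this construction; as written, your displayed global estimate does not close. (Also, the inequality announced in your first paragraph, $d_M(x,y)\le ||T||\,d_N(h(x),h(y))$, is stated in the wrong direction, though your final conclusion has it right.)
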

\begin{proof}

 For every $y\in N$, let us take a chart $\psi_y: V_y\to  Y$ with $\psi_y(y)=0$. Let us consider the equivalent norm on $Y$,  $|||\cdot|||_y:=||d\psi_y^{-1}(0)(\cdot)||_y$ and  fix $\varepsilon>0$. Let us define the ball $B_{|||\cdot|||_y}(z,t):=\{w\in Y: \,
 |||w-z|||_y<t\}$.

\medskip

\noindent \textbf{Fact.} For every  $r>0$  such that  $ B_{|||\cdot|||_y}(0,r)\subset \psi_y(V_y)$ and every $\widetilde{\varepsilon}>0$, there exists a $C^k$ smooth and Lipschitz function $f_y:Y\toÊ\Real$ such that
\begin{enumerate}
\item $f_y(0)=r$,
\item $||f_y||_\infty:=\sup \{|f_y(z)|: \,z\in Y\} = r$,
\item $\Lip(f_y)\le (1+\varepsilon)^2$ for the norm $|||\cdot|||_y$,
\item $f_y(z)=0$ for every $z \in Y$ with $|||z|||_y\ge r$, and
\item $|||z|||_y\le r-f_y(z)+\widetilde{\varepsilon}$ for every $|||z|||_y\le {r}$.
\end{enumerate}
Let us prove the Fact. First of all, let us take $r>0$, $\widetilde{\varepsilon}>0$ and $0<\alpha< \min\{1,\frac{\varepsilon}{4},\frac{2\widetilde{\varepsilon}}{5r}\}$. Since $N$ is a 
$C^{k}$ Finsler manifold  modeled on a $k$-admissible Banach space $Y$, there are an open subset 
$B\supset \{x\in Y: |||x|||_y\le  1\}$ of $Y$ and a $C^k$ smooth function $g:B\to \Real$ such that
\begin{itemize}
\item[(i)] $|g(x)-|||x|||_y|<\alpha/2$ on $B$, and
\item[(ii)] $\Lip(g)\le (1+\alpha/2)$ for the norm $|||\cdot|||_y$.
\end{itemize}
Now, let us take a $C^\infty$ smooth and Lipschitz function $\theta:\Real \to [0,1]$ such that
\begin{itemize}
\item[(i)] $\theta(t)=0$ whenever   $t\le \alpha$,
\item[(ii)] $\theta(t)=1$ whenever $t\ge 1-\alpha$,
\item[(iii)] $\Lip(\theta)\le (1+\varepsilon)$, and
\item[(iv)] $|\theta(t)-t|\le 2\alpha$ for every $t\in[0,1+\alpha]$.
\end{itemize}
Let us define 
\begin{equation*}
f(x)=
\begin{cases}
\theta(g(x)) & \text{if $x\in B$},\\
1 & \text{if $x\in Y\setminus B.$}
\end{cases}
\end{equation*}
It is straightforward to verify that $f$ is well-defined,  $C^k$ smooth, $f(x)=1$ whenever $|||x|||_y \ge 1$ and $f(x)=0$ whenever
$|||x|||_y \le \alpha/2$ 
Let us now consider  $f_y:Y\to [0,r]$ as $f_y(z)=r(1-f(\frac{z}{r}))$, which is $C^k$ smooth, Lipschitz and satisfies:
\begin{itemize}
\item[(i)] $f_y(0)=r$,
\item[(ii)] $||f_y||_\infty = r$,
\item[(iii)] $|f_y(z)-f_y(x)|\le (1+\varepsilon)(1+\alpha/2)|||z-x|||_y\le (1+\varepsilon)^2|||z-x|||_y$,
\item[(iv)] $f_y(z)=0$ for every $z\in Y$ with $|||z|||_y\ge r$,
\item[(v)] $|||\frac{z}{r}|||_y\le \frac{\alpha}{2} + g(\frac{z}{r})\le \frac{\alpha}{2}+2\alpha+f(\frac{z}{r})$ for every $|||z|||_y\le r$. Thus,
$|||z|||_y\le r(\frac{\alpha}{2}+2\alpha) +r-f_y(z)\le \widetilde{\varepsilon}+r-f_y(z)$ for every $|||z|||_y\le {r}$.
\end{itemize}

\medskip

Let us now prove Proposition \ref{isom}. Let us fix $p_1,p_2\in M$ and $\varepsilon>0$. Let us consider $\sigma:[0,1]\to M$ a piecewise  $C^1$ smooth path in $M$ joining $p_1$ and $p_2$, with $\ell(\sigma)\le d_M(p_1,p_2)+\varepsilon$. Since $h:M\to N$ is continuous, the path $\widehat{\sigma}:=h\circ \sigma: [0,1]\to N$,  joining $h(p_1)$ and $h(p_2)$,
is continuous as well.  For every $q\in \widehat{\sigma}([0,1])$, there is $0<r_q<1$ and a chart $\psi_q:V_q\to Y$ such that $\psi_q(q)=0$, $B_N(q,r_{q})\subset V_q$ and the bijection $\psi_q:V_q\to \psi_q(V_q)$ is $(1+\varepsilon)$-bi-Lipschitz for the norm $||d\psi^{-1}_q(0)(\cdot)||_q$ in $Y$
(see Lemma \ref{desigualdades:BiLipschitz}). Since  $\widehat{\sigma}([0,1])$ is a compact set of $N$, there is a finite family
of points  $0=t_1<t_2<...<t_m=1$ and a family of open intervals $\{I_{k}\}_{k=1}^m$  covering the interval $[0,1]$ so that,
if we define  $q_k:=\widehat{\sigma}(t_k)$ and $r_k:=r_{q_k}$, for every $k=1,...,m$, we have
\begin{enumerate}
\item[(a)] $\widehat{\sigma}(I_{k})\subset B_N(q_k, r_{k}/(1+\varepsilon))$,
\item[(b)] $I_{j}\cap I_{k}\neq \emptyset$ if, and only if, $|j-k|\le 1$.
\end{enumerate}
It is clear that $\widehat{\sigma}([0,1])\subset \bigcup_{k=1}^{m} B_N(q_k,\frac{r_k}{1+\varepsilon})$. Now,  let us select a point $s_k\in I_k\cap I_{k+1}$ such that $t_k<s_k<t_{k+1}$,  for every $k=1,...,m-1$. Let us write $a_k:=\widehat{\sigma}(s_k)$, for every
$k=1,\cdots, m-1$,
 $\psi_k:=\psi_{q_k}$, $V_k:=V_{q_k}$ and $|||\cdot|||_k:=||d\psi^{-1}_{k}(0)(\cdot)||_{q_k}$, for every $k=1,....m$.
Notice that $a_k\in  B_N(q_k, \frac{r_k}{1+\varepsilon})\cap B_N(q_{k+1}, \frac{r_{k+1}}{1+\varepsilon})$, for every
$k=1, \cdots, m-1$.
 Since $\psi_{k}:V_{k}\to \psi_{k}(V_{k})$ is $(1+\varepsilon)$-bi-Lipschitz for the norm $|||\cdot|||_{k}$ in $Y$,
 we deduce that
   $\psi_{k}(a_k)\in B_{|||\cdot|||_{k}}(0,r_{k})$, for every
$k=1, \cdots, m-1$.

Now, let us we apply  the above Fact to $r_k$, $\varepsilon$ and $\widetilde{\varepsilon}=\varepsilon/2m$ to obtain  functions  $f_{k}:Y\to [0,r_k]$ satisfying   properties (1)--(5), $k=1,\cdots, m$. Let us  define the $C^k$ smooth and Lipschitz functions $g_k:N \to [0,r_k]$ as $g_k(z)=f_{k}(\psi_k(z))$ for every $z\in V_k$ and $g_k(z)=0$ for $z\not\in V_k$, $k=1,\cdots,m$. Then,
\begin{enumerate}
\item[(i)] $g_k\in C_b^k(N)$;
\item[(ii)] $g_k(q_k)=r_{k}$;
\item[(iii)] $|g_k(z)-g_k(x)|\le (1+\varepsilon)^3 d_N(z,x)$ for all $z,x\in N$;
\item[(iv)] If $z\in \psi_k^{-1}(B_{|||\cdot|||_k}(0,{r_k}))$, then
 $|||\psi_k(z)|||_{k}\le {r_k}$ and from condition (5) on the Fact, we obtain
\begin{equation*}
d_N(z,q_k)\le (1+\varepsilon)|||\psi_k(z)-\psi_k(q_k)|||_{k}  = (1+\varepsilon)|||\psi_k(z)|||_{k} \le (1+\varepsilon)(r_{k}-g_k(z)+\varepsilon/2m).
\end{equation*}
\end{enumerate}
The Lipschitz constant of $g_k\circ h$, for $k=1,\cdots, m$, is the following
\begin{align*}
\Lip(g_k\circ h) & \le ||g_k\circ h||_{C_b^1(M)}=||T(g_k)||_{C_b^1(M)} \le ||T|| ||g_k||_{C_b^1(N)}= \\
& = ||T|| \max\{||g_k||_\infty, ||dg_k||_\infty\}\le ||T|| (1+\varepsilon)^3.
\end{align*}
Now, since  $r_{k}=g_{k}(q_k)=g_{k}(h(\sigma(t_k)))$ and $\psi_{k}(h(\sigma(s_k)))\in B_{|||\cdot|||_{k}}(0,{r_k})$, we have
\begin{align*}
d_N(h(p_1),&h(p_2))   \le \sum_{k=1}^{m-1} [d_N(h(\sigma(t_k)), h(\sigma(s_k))) +d_N(h(\sigma(s_k)), h(\sigma(t_{k+1}))) ] \le \\
&  \le \sum_{k=1}^{m-1} (1+\varepsilon)[ g_{k}(q_k)-g_{k}(h(\sigma(s_k)))+\\
&\qquad \qquad \quad \qquad \qquad \qquad \qquad + g_{k+1}(q_{k+1})-g_{k+1}(h(\sigma(s_k))) + \varepsilon/m ] \le \\
& \le \sum_{k=1}^{m-1} (1+\varepsilon)[\Lip(g_{k}\circ h) d_M(\sigma(t_k),\sigma(s_k)) + \\
& \qquad \qquad\qquad\qquad \qquad \qquad + \Lip(g_{k+1}\circ h)d_M(\sigma(t_{k+1}),\sigma(s_k)) +\varepsilon/m] \le \\
& \le \sum_{k=1}^{m-1} ||T||(1+\varepsilon)^4[d_M(\sigma(t_k),\sigma(s_k)) + d_M(\sigma(t_{k+1}),\sigma(s_k))] +\varepsilon(1+\varepsilon) \le \\
& \le \sum_{k=1}^{m-1} ||T||(1+\varepsilon)^4 \ell(\sigma_{\mid_{[t_k,t_{k+1}]}})+\varepsilon(1+\varepsilon) = ||T||(1+\varepsilon)^4 \ell(\sigma) +\varepsilon(1+\varepsilon) \le \\
&  \qquad \qquad\qquad \qquad \qquad  \qquad \qquad \le  ||T||(1+\varepsilon)^4(d_M(p_1,p_2)+\varepsilon)+\varepsilon(1+\varepsilon)
\end{align*}
for every $\varepsilon>0$. Thus, $h$ is $||T||$-Lipschitz.
\end{proof}


\begin{lem}\label{weak:smooth}
Let $M$ and $N$ be $C^k$ Finsler manifolds such that $N$ is modeled on a Banach space with a Lipschitz $C^k$ smooth bump function.
Let $h:M\to N$ be a homeomorphism such that $f\circ h \in C_b^k(M)$ for every $f\in C_b^k(N)$. Then, $h$ is a weakly $C^k$ smooth  function on $M$.
\end{lem}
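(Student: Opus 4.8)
The plan is to reduce the problem of weak $C^k$ smoothness of $h$ to the $C^k$ smoothness of scalar functions of the form $y^*\circ(\psi\circ h\circ\varphi^{-1})$ for appropriate charts and continuous linear functionals, and then to realize each such functional as (locally) coming from an element of $C_b^k(N)$ whose composition with $h$ lies in $C_b^k(M)$ by hypothesis. Fix $x_0\in M$, set $y_0=h(x_0)$, and choose a chart $(W,\varphi)$ of $M$ at $x_0$ and a chart $(V,\psi)$ of $N$ at $y_0$ with $\psi(V)\subset Y$, where $Y$ is the model space of $N$. Since $h$ is a homeomorphism, after shrinking $W$ we may assume $h(W)\subset V$, so that the local representative $F:=\psi\circ h\circ\varphi^{-1}:\varphi(W)\to Y$ is well-defined and continuous. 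By the definition of weak $C^k$ smoothness for manifold maps, it suffices to show that $y^*\circ F$ is $C^k$ smooth on $\varphi(W)$ for every $y^*\in Y^*$.

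\medskip

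First I would handle the case of a coordinate functional obtained from the chart. The key idea is that for each $y^*\in Y^*$ I want to produce, locally near $y_0$, a function $f\in C_b^k(N)$ such that $f$ agrees with (an affine transform of) $y^*\circ\psi$ on a neighborhood of $y_0$. Concretely, since $Y$ admits a Lipschitz $C^k$ smooth bump function, I can take a $C^k$ smooth bump $b:Y\to[0,1]$ supported in a small ball $B(0,s)\subset\psi(V)$ with $b\equiv 1$ on $B(0,s/2)$, and set
\begin{equation*}
f(q)=b(\psi(q))\cdot\big(y^*(\psi(q))\big)\quad\text{for }q\in V,\qquad f(q)=0\ \text{otherwise}.
\end{equation*}
Then $f$ is $C^k$ smooth on $N$ (the product of a $C^k$ bump with the $C^k$ function $y^*\circ\psi$, extended by zero off the support), is bounded, and has bounded derivative because $b$ is Lipschitz, $y^*\circ\psi$ is locally Lipschitz on the (bounded) support of $b$, and $\|d\psi^{-1}(\cdot)\|$ is controlled by property (P1); hence $f\in C_b^k(N)$. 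By hypothesis $f\circ h\in C_b^k(M)\subset C^k(M)$, so $(f\circ h)\circ\varphi^{-1}=f\circ(h\circ\varphi^{-1})$ is $C^k$ smooth on $\varphi(W)$. On the smaller neighborhood where $F(p)\in B(0,s/2)$ (available by continuity of $F$ and $F(\varphi(x_0))=0$, after shrinking $W$) we have $b(\psi(h(\varphi^{-1}(p))))=1$, so there
\begin{equation*}
(f\circ h\circ\varphi^{-1})(p)=y^*\big(\psi(h(\varphi^{-1}(p)))\big)=y^*(F(p)).
\end{equation*}
Thus $y^*\circ F$ coincides with the $C^k$ function $f\circ h\circ\varphi^{-1}$ on a neighborhood of $\varphi(x_0)$, and is therefore $C^k$ smooth there. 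Since $y^*\in Y^*$ was arbitrary and $x_0$ was arbitrary, $F$ is weakly $C^k$ smooth, which is exactly the definition of $h$ being weakly $C^k$ smooth on $M$.

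\medskip

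The main obstacle I anticipate is verifying that the locally defined $f$ genuinely lies in $C_b^k(N)$, i.e. that it is globally $C^k$, bounded, and of bounded derivative \emph{as a function on the manifold} $N$. Boundedness of $f$ is immediate, but bounded derivative requires care: one must check that $\|df(q)\|_q$ stays bounded uniformly in $q$, which amounts to combining the Lipschitz constant of $b$, a bound on $y^*\circ\psi$ and its derivative on the compact-like bounded region $\supp(b)$, and the $(1+\varepsilon)$-equivalence of the chart norm with $\|\cdot\|_q$ furnished by Lemma \ref{desigualdades:BiLipschitz}. Equally, the extension of $f$ by zero outside $V$ must remain $C^k$ across the boundary of the support; this is handled by choosing $s$ small enough that $\supp(b)\subset\psi(B_N(y_0,r))\subset\psi(V)$ for a ball on which the chart is bi-Lipschitz, so that the support of $f$ is a closed set contained in $V$ and the zero-extension is $C^k$ globally. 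Once these routine but essential estimates are in place, the argument closes immediately, and the only structural hypothesis actually used on $N$ is the existence of a Lipschitz $C^k$ smooth bump on its model space, exactly as assumed.
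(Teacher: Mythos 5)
Your proposal is correct and follows essentially the same route as the paper's proof: for each $y^*\in Y^*$ one multiplies $y^*\circ\psi$ by a Lipschitz $C^k$ bump supported in a small ball inside $\psi(V)$, extends by zero to get a function in $C_b^k(N)$, composes with $h$, and observes that on a small enough neighborhood of $x_0$ this composition coincides with $y^*\circ(\psi\circ h\circ\varphi^{-1})$. The paper likewise only asserts membership in $C_b^k(N)$ without detailing the derivative bound you flag, so your extra care there is welcome but does not change the argument.
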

\begin{proof}
Let us fix $x\in M$ and $\varepsilon=1$. There are charts  $\varphi:U\to X$ of $M$ at $x$ and $\psi: V\to Y$ of $N$ at $h(x)$ satisfying inequalities  \eqref{palaisdef} and \eqref{B-L1} on $U$ and $V$, respectively. We can assume that  $h(U)\subset V$. Since $Y$ admits  a Lipschitz and $C^k$ smooth bump function and $\psi(h(U))$ is an open neighborhood of $\psi(h(x))$ in $Y$, there are real numbers $0<s<r$ such that $B(\psi(h(x)),s) \subset B(\psi(h(x)),r) \subset \psi(h(U))$ and a Lipschitz and  $C^k$ smooth function $\alpha:Y\to \Real$ such that $\alpha(y)=1$ for  $y\in B(\psi(h(x)),s)$ and $\alpha(y)=0$ for $y\not\in B(\psi(h(x)),r)$. Let us define $U_0:=h^{-1}(\psi^{-1}(B(\psi(h(x)),s)))\subset U$, which is an open neighborhood of $x$ in $M$.


Let us check that  $y^* \circ (\psi \circ  h \circ \varphi^{-1})$ is $C^k$ smooth on $\varphi(U_0)\subset X$ for all
$y^* \in Y^*$. Following the proof of  \cite[Theorem 4]{GAJAANG}, we define $g:N\to \Real$ as $g(y)=0$ whenever $y\not\in V$ and $g(y)=\alpha(\psi(y))\cdot y^*(\psi(y))$ whenever $y\in V$. It is clear that $g\in C_b^k(N)$ and, by assumption, $g\circ h \in C_b^k(M)$. Now, it follows that $\psi(h(\varphi^{-1}(z)))\in B(\psi(h(x)),s)$  for every $z\in \varphi(U_0)$. Thus
\begin{align*}
y^*\circ (\psi \circ  h \circ \varphi^{-1})(z) & = y^* (\psi(h(\varphi^{-1}(z))))=\alpha(\psi(h(\varphi^{-1}(z)))) y^*(\psi(h(\varphi^{-1}(z))))= \\
& =g(h(\varphi^{-1}(z)))=g\circ h\circ \varphi^{-1} (z),
\end{align*}
for every $z\in \varphi(U_0)$.
Since $(g\circ h)\circ \varphi^{-1}$ is $C^k$ smooth on $\varphi(U_0)$, we have that $y^*\circ (\psi \circ  h \circ \varphi^{-1})$ is $C^k$ smooth on $\varphi(U_0)$. Thus $\psi \circ  h \circ \varphi^{-1}$ is weakly $C^k$ smooth on $\varphi(U_0)$ and $h$ is weakly $C^k$ smooth on $M$.
\end{proof}

\medskip

\begin{prooftheoMyersNakai}
If $h:M\to N$ is a weak $C^k$ Finsler isometry,  we can define the  operator $T:C_b^k(N)\to C_b^k(M)$  by $T(f)=f\circ h$.
Let us check that $T$ is well defined. For every $x\in M$, there are charts $\varphi:U\to X$ of $M$ at $x$ and $\psi:V\to Y$ of $N$ at $h(x)$, such that $h(U)\subset V$ and $\psi\circ h \circ \varphi^{-1}$ is weakly $C^k$ smooth on $\varphi(U)\subset X$.
Also, $f\circ \psi^{-1}$  is $C^k$ smooth on $\psi(V)\subset Y$.
Thus,  by \cite[Proposition 4.2]{GULLA}, $(f\circ \psi^{-1})\circ (\psi \circ h \circ \varphi^{-1})=f \circ h \circ \varphi^{-1}$ is $C^k$ smooth on $\varphi(U)$.
Therefore, $f\circ h $ is $C^k$ smooth on $U$. Since this holds for every $x\in M$, we deduce that $f\circ h $ is $C^k$ smooth on  $M$. Moreover, $T$ is an algebra isomorphism with $||T(f)||_{C_b^1(M)}=||f\circ h||_{C_b^1(M)}=||f||_{C_b^1(N)}$ for every $f\in C_b^k(N)$.

\medskip

Conversely, let $T: C_b^k(N)\to C_b^k(M)$  be a normed algebra isometry. Then, we can define the function $h:H(C_b^k(M))\to H(C_b^k(N))$ by $h(\varphi)=\varphi\circ T$ for every $\varphi \in H(C_b^k(M))$. The function $h$ is a bijection. Moreover, $h$  is an homeomorphism.
Recall that we  identify  $x\in M$ with $\delta_x\in C_b^k(M)$. Thus, $h(x)=h(\delta_x)=\delta_x\circ T$.
Since $h$ is an homeomorphism,  by Proposition \ref{countable:neigh}, we obtain  for every $p\in N$ a unique
point $x\in M$ such that
$h(\delta_x)=\delta_{p}$. Let us check that  $T(f)=f\circ h$ for all $f\in C_b^k(N)$. Indeed,  for every $x\in M$ and every $f\in C_b^k(N)$,
\begin{equation*}
T(f)(x)=\delta_x(T(f))=(\delta_x\circ T)(f)=h(\delta_x)(f)=\delta_{h(x)}(f)=f(h(x))=f\circ h (x).
\end{equation*}

Now, from Proposition \ref{isom} and Lemma \ref{weak:smooth} we deduce that $h$ is a weak $C^k$ Finsler isometry.
\end{prooftheoMyersNakai}

\begin{rem}
It is worth mentioning that, for  Riemannian manifolds, every metric isometry is a $C^1$ Finsler isometry. This result was proved
by S. Myers and N. Steenrod \cite{MyStee} in the finite-dimensional case and by I. Garrido, J.A. Jaramillo and Y.C. Rangel \cite{GaJaRa}  in the general case. Also, S. Deng and Z. Hou \cite{DengHou} obtained a version for  finite-dimensional  Riemannian-Finsler manifolds. Nevertheless, there is no a generalization, up to our knowledge, of the  Myers-Steenrod theorem  for all Finsler manifolds. Thus, for $k=1$ we can only assure that the  metric isometry obtained in Theorem \ref{Myers:Nakai}  is weakly $C^1$ smooth.
\end{rem}

Let us finish this note with some interesting corollaries of Theorem \ref{Myers:Nakai}. First, recall that every separable Banach space with a Lipschitz $C^k$ smooth bump function satisfies  condition (A.2) and every WCG Banach space with a $C^1$ smooth bump function satisfies  condition (A.1) for $k=1$. 
\begin{cor}
Let $M$ and $N$ be complete,  $C^{1}$ Finsler manifolds that are $C^1$ uniformly bumpable and are modeled on WCG Banach spaces. Then $M$ and $N$ are weakly $C^1$ equivalent as Finsler manifolds if, and only if, $C_b^1(M)$ and $C_b^1(N)$ are equivalent as normed algebras. Moreover, every normed algebra isomorphism $T:C_b^1(N)\to C_b^1(M)$ is of the form $T(f)=f\circ h$ where $h:M\to N$ is a weak $C^1$ Finsler isometry.
\end{cor}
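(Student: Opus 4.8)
The plan is to obtain this statement as the special case $k=1$ of Theorem \ref{Myers:Nakai}. Completeness, $C^1$ uniform bumpability, and connectedness of $M$ and $N$ are assumed directly, so the only item that must be checked is that the WCG model spaces of $M$ and $N$ are $1$-admissible; once this is established, the conclusion is literally the $k=1$ instance of the theorem.

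First I would record the consequence of uniform bumpability noted after Definition \ref{bumpable}: since $M$ is a $C^1$ Finsler manifold that is $C^1$ uniformly bumpable and is modeled on a Banach space $X$, the space $X$ admits a Lipschitz $C^1$ smooth bump function, and in particular a $C^1$ smooth bump function; the same applies to the model space of $N$. Thus each model space is a WCG Banach space carrying a $C^1$ smooth bump function.

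Next I would invoke the approximation result recalled just before the corollary: every WCG Banach space with a $C^1$ smooth bump function satisfies condition (A.1) for $k=1$, i.e. every equivalent norm can be approximated in the Hausdorff metric by equivalent $C^1$ smooth norms. By the lemma stated immediately after the definition of $k$-admissibility, condition (A.1) for $k=1$ guarantees that such a space is $1$-admissible. Hence both $M$ and $N$ are modeled on $1$-admissible Banach spaces.

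With every hypothesis of Theorem \ref{Myers:Nakai} verified for $k=1$, I would simply apply that theorem: $M$ and $N$ are weakly $C^1$ equivalent as Finsler manifolds precisely when $C_b^1(M)$ and $C_b^1(N)$ are equivalent as normed algebras, and every normed algebra isomorphism $T:C_b^1(N)\to C_b^1(M)$ takes the form $T(f)=f\circ h$ for a weak $C^1$ Finsler isometry $h:M\to N$. Since $k=1$, the theorem's additional conclusion about $C^{k-1}$ Finsler isometries is vacuous, so one cannot upgrade $h$ beyond weak $C^1$ smoothness. There is no genuine obstacle here beyond correctly chaining the bumpability consequence with the WCG norm-approximation theorem; the single substantive step is the reduction to $1$-admissibility.
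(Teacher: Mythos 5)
Your proposal is correct and follows exactly the route the paper intends: the corollary is the $k=1$ case of Theorem \ref{Myers:Nakai}, obtained by noting that $C^1$ uniform bumpability forces the WCG model spaces to admit a $C^1$ smooth bump function, hence to satisfy condition (A.1) for $k=1$ and therefore to be $1$-admissible. The paper leaves the bumpability-to-bump-function step implicit, so your making it explicit is a small improvement in exposition rather than a different argument.
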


\begin{cor}
Let $M$ and $N$ be complete, separable $C^{k}$ Finsler manifolds that are modeled on Banach spaces with a Lipschitz and $C^k$ smooth bump function.
Then $M$ and $N$ are weakly $C^k$ equivalent as Finsler manifolds if and only if $C_b^k(M)$ and $C_b^k(N)$ are equivalent as normed algebras. Moreover, every normed algebra isomorphism $T:C_b^k(N)\to C_b^k(M)$ is of the form $T(f)=f\circ h$ where $h:M\to N$ is a weak $C^k$ Finsler isometry. In particular, $h$ is a $C^{k-1} $ Finsler isometry whenever $k\ge 2$.
\end{cor}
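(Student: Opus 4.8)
The plan is to derive this corollary directly from Theorem \ref{Myers:Nakai} by checking that the stronger, more concrete hypotheses stated here imply the two structural assumptions of that theorem, namely that $M$ and $N$ are $C^k$ uniformly bumpable and are modeled on $k$-admissible Banach spaces. Once these are verified, the equivalence, the explicit form $T(f)=f\circ h$ of the isomorphism, and the weak $C^k$ Finsler isometry character of $h$ are exactly the conclusions of Theorem \ref{Myers:Nakai}.

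First I would establish uniform bumpability. Since $M$ and $N$ are separable $C^k$ Finsler manifolds modeled on Banach spaces admitting a Lipschitz $C^k$ smooth bump function, the characterization recalled in the preliminaries (from \cite{MarLuis}) --- a separable $C^k$ Finsler manifold is $C^k$ uniformly bumpable if and only if its model space admits a Lipschitz $C^k$ smooth bump function --- yields at once that both $M$ and $N$ are $C^k$ uniformly bumpable.

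Next I would verify $k$-admissibility of the model spaces. The key preliminary observation is that a separable (connected) manifold is necessarily modeled on a separable Banach space: any chart identifies a nonempty open subset of $M$ with an open subset of the model space $X$, and separability passes from $M$ to this open set and hence to $X$. Thus the model spaces $X$ and $Y$ are separable Banach spaces carrying a Lipschitz $C^k$ smooth bump function; by the remark following the lemma on $k$-admissibility, such spaces satisfy condition (A.2), and therefore they are $k$-admissible.

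With both hypotheses in place, applying Theorem \ref{Myers:Nakai} gives the stated equivalence and the representation $T(f)=f\circ h$ with $h$ a weak $C^k$ Finsler isometry; the final assertion that $h$ is a $C^{k-1}$ Finsler isometry whenever $k\ge 2$ is precisely the corresponding conclusion of that theorem (it follows since a weakly $C^k$ diffeomorphism is a $C^{k-1}$ diffeomorphism by Guti\'errez and Llavona, so that Proposition \ref{Finsler:Isometry}, applied at level $k-1$, upgrades the metric isometry to a $C^{k-1}$ Finsler isometry). I do not anticipate a genuine obstacle here: the only point requiring a moment's care is the passage from separability of the manifold to separability of its model space, everything else being a direct citation of the quoted equivalence, of the lemma identifying (A.2) spaces as $k$-admissible, and of Theorem \ref{Myers:Nakai} itself.
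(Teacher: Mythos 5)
Your proposal is correct and follows exactly the route the paper intends: verify $C^k$ uniform bumpability via the quoted equivalence from \cite{MarLuis} for separable manifolds, note that the (necessarily separable) model spaces satisfy condition (A.2) and are therefore $k$-admissible, and then invoke Theorem \ref{Myers:Nakai}. Your explicit justification that separability of the manifold passes to the model space, and your remark on how the $C^{k-1}$ upgrade follows from Guti\'errez--Llavona together with Proposition \ref{Finsler:Isometry}, only make explicit what the paper leaves implicit.
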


Since every   weakly $C^k$ smooth function with values in a finite-dimensional normed space is $C^k$ smooth and every finite-dimensional
$C^{k}$ Finsler manifold is $C^k$ uniformly bumpable \cite{MarLuis}, we obtain the following Myers-Nakai result for finite-dimensional $C^{k}$ Finsler manifolds.

\begin{cor}
Let $M$ and $N$ be complete and finite dimensional $C^{k}$ Finsler manifolds. Then $M$ and $N$ are $C^k$ equivalent as Finsler manifolds if, and only if, $C_b^k(M)$ and $C_b^k(N)$ are equivalent as normed algebras. Moreover, every normed algebra isomorphism $T:C_b^k(N)\to C_b^k(M)$ is of the form $T(f)=f\circ h$ where $h:M\to N$ is a $C^k$ Finsler isometry.
\end{cor}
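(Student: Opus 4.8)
The plan is to deduce this corollary directly from Theorem \ref{Myers:Nakai}, so the first task is to check that complete finite-dimensional $C^k$ Finsler manifolds satisfy the three standing hypotheses of that theorem: completeness (assumed), $C^k$ uniform bumpability, and being modeled on a $k$-admissible Banach space. The first of the two nontrivial points is already recorded in the text preceding the statement, since every finite-dimensional $C^k$ Finsler manifold is $C^k$ uniformly bumpable by \cite{MarLuis}. For the second, I would observe that a finite-dimensional model space $X=\Real^n$ satisfies condition (A.1) of the preceding Lemma: every equivalent norm on $X$ is the Minkowski functional of a bounded, symmetric convex body, and in finite dimensions any such body can be approximated in the Hausdorff metric by smooth strictly convex bodies, so the associated $C^\infty$ norms approximate the given norm. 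Hence $X$ is $k$-admissible by the Lemma, and Theorem \ref{Myers:Nakai} applies to both $M$ and $N$.

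Granting this, Theorem \ref{Myers:Nakai} yields that $C_b^k(M)$ and $C_b^k(N)$ are equivalent as normed algebras if and only if $M$ and $N$ are weakly $C^k$ equivalent as Finsler manifolds, and that every normed algebra isomorphism $T:C_b^k(N)\to C_b^k(M)$ has the form $T(f)=f\circ h$ for some weak $C^k$ Finsler isometry $h:M\to N$. The heart of the corollary is then to upgrade the qualifier ``weak'' to a genuine $C^k$ Finsler isometry in this finite-dimensional setting. The key observation is that, because the target $N$ is modeled on a finite-dimensional space, a weakly $C^k$ smooth map into $N$ is automatically $C^k$ smooth: working in charts, each coordinate of the local representative is $C^k$ by the very definition of weak smoothness, and finitely many $C^k$ coordinates assemble into a $C^k$ map. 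Applying this to $h$ and to $h^{-1}$ shows that the weakly $C^k$ diffeomorphism $h$ is in fact a $C^k$ diffeomorphism, while it remains a metric isometry for the Finsler metrics. Proposition \ref{Finsler:Isometry} then promotes this $C^k$ diffeomorphism and metric isometry to a $C^k$ Finsler isometry, which is precisely the asserted form of $T$; note that in this case we obtain full $C^k$ regularity rather than merely the $C^{k-1}$ conclusion of the general theorem.

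It remains to close the equivalence by treating the converse implication. If $M$ and $N$ are $C^k$ equivalent as Finsler manifolds via a $C^k$ Finsler isometry $h$, then $h$ preserves the length of every piecewise $C^1$ path and hence the Finsler metric, so it is a metric isometry; being a $C^k$ diffeomorphism it is in particular a weakly $C^k$ diffeomorphism, and therefore a weak $C^k$ Finsler isometry. Thus $M$ and $N$ are weakly $C^k$ equivalent, and Theorem \ref{Myers:Nakai} delivers the equivalence of $C_b^k(M)$ and $C_b^k(N)$ as normed algebras. I expect the only genuinely delicate point to be the verification of $k$-admissibility in finite dimensions, that is, the Hausdorff approximation of an arbitrary norm by smooth ones; the passage from weak to strong smoothness and the appeal to Proposition \ref{Finsler:Isometry} are then entirely routine.
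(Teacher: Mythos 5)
Your argument is correct and follows essentially the same route as the paper, which obtains this corollary from Theorem \ref{Myers:Nakai} by exactly the two observations you make: every finite-dimensional $C^k$ Finsler manifold is $C^k$ uniformly bumpable, and every weakly $C^k$ smooth map into a finite-dimensional space is $C^k$ smooth, so the weak $C^k$ Finsler isometry upgrades to a $C^k$ diffeomorphism and metric isometry, hence a $C^k$ Finsler isometry by Proposition \ref{Finsler:Isometry}. The only cosmetic difference is your direct convex-body verification of $k$-admissibility; the paper instead gets this from its earlier remark that separable spaces with Lipschitz $C^k$ bump functions satisfy (A.2) and WCG spaces with $C^1$ bumps satisfy (A.1), both of which cover $\Real^n$.
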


We obtain an interesting application of Finsler manifolds to Banach spaces. Recall the well-known Mazur-Ulam Theorem establishing that every surjective isometry between two Banach spaces is affine.
\begin{cor}
Let $X$ and $Y$ be WCG Banach spaces with  $C^1$ smooth bump functions. Then $X$ and $Y$ are isometric if, and only if, $C_b^1(X)$ and $C_b^1(Y)$ are equivalent as normed algebras. Moreover, every normed algebra isomorphism $T:C_b^1(Y)\to C_b^1(X)$ is of the form $T(f)=f\circ h$ where $h:X\to Y$ is a surjective isometry. In particular, $h$ and $h^{-1}$ are  affine isometries.
\end{cor}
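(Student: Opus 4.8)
The plan is to regard the Banach spaces $X$ and $Y$ as $C^\infty$ (in particular $C^1$) Finsler manifolds, equipping each tangent space $T_xX=X$ with the constant Finsler norm $\|(x,v)\|_X=\|v\|$, and similarly for $Y$; recall that the paper has already noted that Banach spaces are $C^\infty$ Finsler manifolds in the sense of Palais. With this structure the Finsler metric coincides with the norm metric: the straight segment joining $p$ and $q$ has length $\|p-q\|$, while convexity of the norm shows no piecewise $C^1$ path can be shorter, so $d_X(p,q)=\|p-q\|$, and likewise on $Y$. First I would verify the hypotheses of Theorem \ref{Myers:Nakai} for $k=1$. Both spaces are complete, being Banach spaces. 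Since $X$ and $Y$ are WCG and carry $C^1$ smooth bump functions, they satisfy condition (A.1) for $k=1$ and are therefore $1$-admissible. Applying condition (A.1) to the original norm furnishes an equivalent $C^1$ smooth norm on each space; composing such a norm with a suitable $C^\infty$ real function of one variable produces, for each center and radius, a $C^1$ bump function with Lipschitz constant controlled by the norm-equivalence constants, so $X$ and $Y$ are $C^1$ uniformly bumpable.

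With these hypotheses in place, Theorem \ref{Myers:Nakai} applies with $k=1$: the spaces $X$ and $Y$ are weakly $C^1$ equivalent as Finsler manifolds if and only if $C_b^1(X)$ and $C_b^1(Y)$ are equivalent as normed algebras, and every normed algebra isomorphism $T\colon C_b^1(Y)\to C_b^1(X)$ has the form $T(f)=f\circ h$ for a weak $C^1$ Finsler isometry $h\colon X\to Y$. By definition such an $h$ is in particular a surjective metric isometry for the Finsler metrics; since these agree with the norm metrics, $h$ satisfies $\|h(x)-h(y)\|=\|x-y\|$ for all $x,y\in X$ and is onto, that is, $h$ is a surjective isometry between the Banach spaces $X$ and $Y$.

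It remains to identify the analytic form of $h$ and to treat the converse. Here I would invoke the Mazur-Ulam theorem: every surjective isometry between Banach spaces is affine. Thus $h(x)=h(0)+L(x)$ for a surjective linear isometry $L\colon X\to Y$, and $h^{-1}$ is affine as well, giving the final assertion. For the reverse implication, if $\phi\colon X\to Y$ is any surjective isometry, then Mazur-Ulam makes $\phi$ affine, hence a $C^\infty$ diffeomorphism which is simultaneously a metric isometry for the Finsler metrics; by Proposition \ref{Finsler:Isometry} it is a $C^\infty$ Finsler isometry, in particular a weak $C^1$ Finsler isometry, so Theorem \ref{Myers:Nakai} produces the normed algebra isomorphism $f\mapsto f\circ\phi$.

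I expect the only genuinely subtle point to be conceptual rather than computational. Theorem \ref{Myers:Nakai} delivers merely a weakly $C^1$ smooth map $h$, which need not be Fr\'echet differentiable, as the $c_0(\Natural)$ example recalled in the Introduction shows. What rescues the situation is that a weak $C^1$ Finsler isometry is by definition a metric isometry, so differentiability can be bypassed altogether and one appeals directly to Mazur-Ulam, which upgrades a bare surjective metric isometry all the way to an affine map. The remaining work, namely deducing $C^1$ uniform bumpability of $X$ and $Y$ from condition (A.1) and checking that the Finsler metric of a Banach space is the norm metric, is routine.
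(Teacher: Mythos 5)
Your proposal is correct and follows exactly the route the paper intends: the corollary is derived from Theorem \ref{Myers:Nakai} by viewing $X$ and $Y$ as complete $C^\infty$ Finsler manifolds whose Finsler metric is the norm metric, checking $1$-admissibility via condition (A.1) and $C^1$ uniform bumpability, and then upgrading the resulting surjective metric isometry to an affine map by the Mazur--Ulam theorem, which is precisely why the paper recalls that theorem immediately before the statement. The details you supply that the paper leaves implicit (the identification $d_X(p,q)=\|p-q\|$, the construction of uniform bumps from an equivalent $C^1$ smooth norm, and the use of Proposition \ref{Finsler:Isometry} for the converse) are all sound.
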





\end{document}